\newtheorem{theorem}{Theorem}[section]
\newtheorem{lemma}[theorem]{Lemma}
\newtheorem{proposition}[theorem]{Proposition}
\newtheorem{definition}[theorem]{Definition}
\newtheorem{remark}[theorem]{Remark}
\numberwithin{equation}{section}
\begin{document}
\baselineskip=16pt

\title[Real projective structures on a real curve]{Real
projective structures on a real curve}

\author[I. Biswas]{Indranil Biswas}

\address{School of Mathematics, Tata Institute of Fundamental Research,
Homi Bhabha Road, Bombay 400005, India}

\email{indranil@math.tifr.res.in}

\author[J. Hurtubise]{Jacques Hurtubise}

\address{Department of Mathematics, McGill University, Burnside
Hall, 805 Sherbrooke St. W., Montreal, Que. H3A 2K6, Canada}

\email{jacques.hurtubise@mcgill.ca}

\subjclass[2000]{14F10, 14H60}

\keywords{Projective structure, real curve,
connection, differential operator}

\date{}

\begin{abstract}
Given a compact connected Riemann surface $X$ equipped with an
antiholomorphic involution $\tau$, we consider the projective 
structures on $X$ satisfying a compatibility condition with 
respect to $\tau$. For a projective structure $P$ on $X$, there 
are holomorphic connections and holomorphic differential 
operators on $X$ that are constructed using $P$. When the 
projective 
structure $P$ is compatible 
with $\tau$, the relationships between $\tau$ and the holomorphic 
connections, or the differential operators, associated to $P$
are investigated. The moduli space of
projective structures on a compact oriented $C^\infty$ surface
of genus $g\, \geq\, 2$ has a natural holomorphic symplectic 
structure. It is known that
this holomorphic symplectic manifold is isomorphic to the
holomorphic symplectic manifold defined by the total 
space of the holomorphic cotangent bundle of the Teichm\"uller
space ${\mathcal T}_g$ equipped with the Liouville symplectic 
form. We show that there is an isomorphism between these two 
holomorphic symplectic manifolds that is compatible with $\tau$.
\end{abstract}

\maketitle

\section{Introduction}

A projective structure on a compact Riemann surface $X$ is
defined by giving
a covering of $X$ by holomorphic coordinate charts such that
all the transition functions are M\"obius transformations.
Projective structures have other equivalent formulations
using projective connections, differential operators etc.

Assume that there is an antiholomorphic involution
$$
\tau\, :\, X\, \longrightarrow\, X\, .
$$
Just as compact Riemann surfaces are same as smooth
projective curves defined over $\mathbb C$, pairs of the form
$(X\, ,\tau)$ are same as geometrically irreducible smooth
projective curves over $\mathbb R$. We consider projective
structures on $X$ compatible with $\tau$; a projective
structure $P$ on $X$ is compatible with $\tau$ if $\tau$
takes any holomorphic coordinate function associated to $P$ to
the conjugate of another holomorphic coordinate function 
associated to $P$. So a projective
structure on $X$ compatible with $\tau$ can be called
a \textit{real} projective structure.

This ``reality'' of projective structures becomes more clear
if we consider the equivalent formulations using
projective connections or differential operators. Associated
to each projective structure is a holomorphic (equivalently,
algebraic) projective connection. A projective structure
is real if and only if the corresponding projective connection
is defined over $\mathbb R$. Also, associated
to each projective structure is a holomorphic (equivalently,
algebraic) differential operator of order three from
$TX$ to $((TX)^*)^{\otimes 2}$, where $TX$ is the holomorphic
tangent bundle. A projective structure
is real if and only if the corresponding differential operator
is defined over $\mathbb R$. We investigate these interrelations.

To define a projective structure on a
$C^\infty$ oriented surface, we do not need
to fix a complex structure on the surface. On the contrary, given
a projective structure, there is a underlying complex structure on
the surface.

Let $Y$ be a compact connected oriented $C^\infty$ surface.
Let $\text{Diffeo}_0(Y)$ denote the group of diffeomorphisms of
$Y$ homotopic to the identity map. This group acts on the
space of all projective structures on $Y$ compatible with the
orientation of $Y$. The corresponding quotient space will
be denoted by ${\mathcal P}_0(Y)$, which is a complex manifold
equipped with a natural holomorphic symplectic form. The
Teichm\"uller space
${\mathcal T}(Y)$ for $Y$ is the quotient by $\text{Diffeo}_0(Y)$
of the space of all complex structures on $Y$ compatible with
its orientation. There is a natural projection of
$$
\varphi\, :\,{\mathcal P}_0(Y)\,\longrightarrow\,
{\mathcal T}(Y)
$$
making ${\mathcal P}_0(Y)$ a torsor for the holomorphic cotangent
bundle $\Omega^1_{{\mathcal T}(Y)}\,\longrightarrow\,
{\mathcal T}(Y)$. Let
$$
\tau\, :\, Y\, \longrightarrow\, Y
$$
be an orientation reversing diffeomorphism of order two. Let
$\tau_P$ (respectively, $\tau_T$) be the involution of
${\mathcal P}_0(Y)$ (respectively, ${\mathcal T}(Y)$) constructed
using $\tau$. We prove that there is a holomorphic section of
the projection $\varphi$ that
\begin{itemize}
\item intertwines $\tau_P$ and
$\tau_T$, and

\item the corresponding biholomorphism of
${\mathcal P}_0(Y)$ with $\Omega^1_{{\mathcal T}(Y)}$ takes
the natural symplectic form on ${\mathcal P}_0(Y)$ to the
Liouville symplectic form on $\Omega^1_{{\mathcal T}(Y)}$.
\end{itemize}

In a work with Huisman, \cite{BHH}, the representations
associated to stable real vector bundles are introduced.
The $\text{PGL}_2$ analog of these representations arise
in real projective structures.

\section{Real projective structures}\label{sec2}

Let $X$ be a compact connected Riemann surface. Let
$$
J\, :\,
T^{\mathbb R}X\, \longrightarrow\, T^{\mathbb R}X
$$
be the almost complex structure of $X$.
Assume that $X$
is equipped with an anti--holomorphic involution
\begin{equation}\label{tau}
\tau\, :\, X\, \longrightarrow\, X\, .
\end{equation}
This means that $\tau$ is a smooth self--map of $X$ such
that $\tau\circ\tau\, =\, \text{Id}_X$, and the equality
\begin{equation}\label{J}
{\rm d}\tau(J(x)(v)) \, =\, -J(\tau(x))({\rm d}\tau(v))
\end{equation}
holds for all $x\, \in\, X$ and $v\, \in\, T^{\mathbb R}_xX$,
where ${\rm d}\tau\, :\, T^{\mathbb R}X\, \longrightarrow\,
T^{\mathbb R}X$ is the differential of $\tau$. As mentioned
in the introduction, such a pair $(X\, ,\tau)$ corresponds to a
geometrically irreducible smooth projective curve defined over
$\mathbb R$ (See \cite{GI}, \cite{Si}, \cite{GH}, \cite{Mi} for 
curves defined over $\mathbb R$.)

We will define projective structures on $X$ compatible with
$\tau$. Before that, let us recall the definition of a projective
structure.

The standard action of $\text{SL}(2,{\mathbb C})$ on ${\mathbb C}^2$
induces an action of the M\"obius group
$\text{PSL}(2,{\mathbb C})$ on ${\mathbb C}{\mathbb P}^1$, and
furthermore, $\text{PSL}(2,{\mathbb C})\, =\, \text{Aut}(
{\mathbb C}{\mathbb P}^1)$ (the group of
holomorphic automorphisms). A \textit{holomorphic 
coordinate
function} on an open subset $U\, \subset\, X$ is an injective 
holomorphic
map $\phi\, :\, U\,\longrightarrow\, {\mathbb C}{\mathbb P}^1$.

A projective structure on $X$ is defined by giving a collection
$\{(U_i\, , \phi_i)\}_{i\in I}$ of holomorphic coordinate
functions such that
\begin{itemize}
\item $\bigcup_{i\in I} U_i\, =\, X$, and

\item for each pair $i\, ,i'\,\in\, I$, the composition
\[
\phi_{i'}\circ \phi^{-1}_i\, :\, \phi_i(U_i\cap U_{i'})
\, \longrightarrow\, \phi_{i'}(U_i\cap U_{i'})
\]
is the restriction of some M\"obius transformation.
\end{itemize}

Two such data $\{(U_i\, , \phi_i)\}_{i\in I}$ and
$\{(U_i\, , \phi_i)\}_{i\in I'}$ satisfying the above
conditions are called {\it equivalent} if their union
$\{U_i\, , \phi_i\}_{i\in I\cup I'}$ also satisfies the
two conditions. A \textit{projective structure} on
$X$ is an equivalence class of such data.
Given a projective structure on $X$, the
holomorphic coordinate functions compatible with it will
be called \textit{projective coordinates}.

Consider the involution $\tau$ in Eq. \eqref{tau}. Note that
for any holomorphic coordinate function $(U\, , \phi)$,
the composition $(\tau^{-1}(U)\, , \overline{\phi\circ\tau})$ 
is also a holomorphic coordinate function.

\begin{definition}\label{def1}
{\rm A projective structure on $X$ is said to be {\it compatible}
with $\tau$ if for each projective coordinate $(U\, ,\phi)$,
the composition $(\tau^{-1}(U)\, , \overline{\phi\circ\tau})$ is
also a projective coordinate.}

{\rm For convenience, a projective structure on $X$ compatible
with $\tau$ will also be called a {\it real} projective structure.}
\end{definition}

\begin{remark}\label{rem1}
{\rm The complex projective line ${\mathbb C}{\mathbb P}^1$ has the
real structure defined by $z\, \longmapsto\, \overline{z}$.
The condition that a projective structure is real means that
the projective structure is compatible with $\tau$ and this
real structure on ${\mathbb C}{\mathbb P}^1$. We note that
${\mathbb C}{\mathbb P}^1$ has another real structure defined
by $z\, \longmapsto\, -1/\overline{z}$. However,
the real projective structures on $X$ defined using this real structure
are not different from those defined above because the two real 
structures on ${\mathbb C}{\mathbb P}^1$ differ by an element of the
M\"obius group.}
\end{remark}

Given a projective structure $P$ on $X$, we can construct another
projective structure $P'$ on $X$ which is uniquely determined
by the following condition. Take any holomorphic
coordinate function $(U\, , \phi)$ compatible with $P$, then
the composition $(\tau(U)\, , \overline{\phi\circ\tau})$ is
compatible with $P'$.

Clearly this construction defines an involution on the
space of all projective structures on $X$.
The following lemma is obvious.

\begin{lemma}\label{le1}
A projective structure $P$ on $X$ is real if and only if
$P$ is fixed by the involution constructed above.
\end{lemma}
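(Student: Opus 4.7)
The plan is to simply unfold both sides of the equivalence and observe that, because $\tau$ is an involution, the two conditions coincide word for word.

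First I would recall the two conditions explicitly. By Definition~\ref{def1}, $P$ is real iff for every projective coordinate $(U,\phi)$ of $P$, the chart $(\tau^{-1}(U),\overline{\phi\circ\tau})$ is again a projective coordinate of $P$. On the other hand, the involution described above sends $P$ to the projective structure $P'$ characterised by the property that $(\tau(U),\overline{\phi\circ\tau})$ belongs to $P'$ whenever $(U,\phi)$ belongs to $P$. Since $\tau\circ\tau=\mathrm{Id}_X$, we have $\tau^{-1}(U)=\tau(U)$, so the chart appearing in the definition of $P'$ is the same as the chart appearing in Definition~\ref{def1}.

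Next I would verify the two implications. If $P$ is real, then every chart of the form $(\tau(U),\overline{\phi\circ\tau})$ with $(U,\phi)\in P$ lies in $P$; since $P'$ is uniquely determined by containing exactly such charts (together with the equivalence class they generate), this forces $P'=P$. Conversely, if $P=P'$, then by the defining property of $P'$ each $(\tau(U),\overline{\phi\circ\tau})$ lies in $P$, which is precisely the compatibility condition of Definition~\ref{def1}.

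The only point requiring a small check is the well-definedness of $P'$: one must confirm that the collection $\{(\tau(U_i),\overline{\phi_i\circ\tau})\}$ obtained from a defining atlas of $P$ is itself an atlas of M\"obius-related charts. This follows because complex conjugation on $\mathbb{C}\mathbb{P}^1$ normalises $\mathrm{PSL}(2,\mathbb{C})$ (it is an anti-holomorphic involution of $\mathbb{C}\mathbb{P}^1$, and conjugation by it sends a M\"obius transformation to another M\"obius transformation), and $\tau$ is anti-holomorphic, so the transition functions of the new atlas are again M\"obius. This is the only non-definitional input, and it is exactly the observation already used in Remark~\ref{rem1}; no serious obstacle arises.
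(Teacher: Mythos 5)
Your proof is correct and is exactly the definitional unpacking the paper has in mind; the paper simply declares the lemma obvious and gives no argument, so your write-up (including the observations that $\tau^{-1}(U)=\tau(U)$ and that conjugation normalises the M\"obius group, as in Remark~\ref{rem1}) just makes the omitted routine verification explicit.
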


We will describe the space of all projective structures on $X$
compatible with $\tau$. We begin with the following simple lemma.

\begin{lemma}\label{lem1}
There exists a projective structure on $X$ compatible with $\tau$.
\end{lemma}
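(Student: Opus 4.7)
My plan is to exhibit a real projective structure explicitly via uniformization. Let $\pi\, :\, \widetilde{X}\, \to\, X$ be the universal covering. By the uniformization theorem, $\widetilde{X}$ is $\mathbb{CP}^1$, $\mathbb{C}$, or the upper half plane $\mathbb{H}$ according to the genus of $X$, each embedded canonically in $\mathbb{CP}^1$, and $X\,=\,\widetilde{X}/\Gamma$ with the deck group $\Gamma$ acting by M\"obius transformations. Local inverses of $\pi$ yield holomorphic coordinate functions with values in $\mathbb{CP}^1$ whose transitions lie in $\Gamma$, and therefore define a canonical projective structure $P_0$ on $X$; the task is to show $P_0$ is compatible with $\tau$ in the sense of Definition \ref{def1}.

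The driving observation is that $\tau$, being antiholomorphic, is an orientation-reversing isometry of the unique complete constant-curvature Riemannian metric in the conformal class of $X$ (hyperbolic for $g \geq 2$, flat for $g = 1$, spherical for $g = 0$). Hence $\tau$ lifts to an antiholomorphic isometry $\widetilde{\tau}\, :\, \widetilde{X}\, \to\, \widetilde{X}$, and every such lift, viewed inside $\mathbb{CP}^1$, has the form $z \mapsto (a\overline{z} + b)/(c\overline{z} + d)$ with real coefficients $a, b, c, d$. To verify compatibility, I would take two uniformizing charts $\phi\, :\, U\, \to\, \widetilde{X}$ and $\psi\, :\, V\, \to\, \widetilde{X}$ meeting $\tau^{-1}(U) \cap V \neq \emptyset$, and check that the transition $\overline{\phi \circ \tau} \circ \psi^{-1}$ is M\"obius. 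Locally, $\phi \circ \tau \circ \psi^{-1}$ agrees, after lifting, with $\gamma \circ \widetilde{\tau}$ for some $\gamma \in \Gamma$; since both $\gamma$ and $\widetilde{\tau}$ are described by real matrices, complex conjugation swaps $\overline{z}$ with $z$ and produces a genuine M\"obius transformation. Hence $\overline{\phi \circ \tau}$ lies in the atlas of $P_0$, completing the argument.

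The main obstacle is securing the real-coefficient form of $\widetilde{\tau}$. For $g \geq 2$ this is automatic, since the Fuchsian uniformization is canonical and the full isometry group of $\mathbb{H}$ imposes the required form; the low-genus cases require a normalization of the uniformization by an automorphism of $\widetilde{X}$, or a direct elementary verification on $\mathbb{CP}^1$ and $\mathbb{C}/\Lambda$. A conceptually cleaner alternative I would fall back on uses the affine-space structure of $\mathcal{P}(X)$ over $H^0(X, K_X^{\otimes 2})$: the involution $P \mapsto P'$ constructed just above is an antiholomorphic affine involution of this affine space, and a direct computation shows that the midpoint $\tfrac{1}{2}(P + P')$ of any $P$ and its image is a fixed point, hence a real projective structure by Lemma \ref{le1}.
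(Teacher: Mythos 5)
Your primary argument is the same uniformization argument the paper gives, but you make the key verification harder than it needs to be, and the ``main obstacle'' you identify is self-inflicted. Compatibility only requires that $\overline{\phi\circ\tau}\circ\psi^{-1}$, which locally equals $w\,\longmapsto\,\overline{\gamma(\widetilde{\tau}(w))}$ for a deck transformation $\gamma$, be a M\"obius transformation --- and for that neither $\gamma$ nor $\widetilde{\tau}$ needs to be given by a \emph{real} matrix. Every antiholomorphic automorphism of ${\mathbb C}$, ${\mathbb H}$ or ${\mathbb C}{\mathbb P}^1$ has the form $z\,\longmapsto\,\overline{T(z)}$ (up to a sign in the case of ${\mathbb H}$) with $T$ M\"obius, so $z\,\longmapsto\,\overline{\widetilde{\tau}(z)}$ is M\"obius; and $\overline{\gamma(\zeta)}\,=\,\gamma^{c}(\overline{\zeta})$, where $\gamma^{c}$ is the M\"obius map with conjugated coefficients. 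This is exactly how the paper argues, and it disposes of all genera uniformly: in genus $1$ the deck transformations $z\,\longmapsto\, z+\omega$ are \emph{not} real, yet the argument goes through verbatim, and in genus $0$ compatibility is automatic since the projective structure on ${\mathbb C}{\mathbb P}^1$ is unique. So your reduction to ``real coefficients'' is both unnecessary and, in the low-genus cases, false as stated --- which is precisely why you then have to hedge. Your fallback via the affine structure of the space of projective structures over $H^0(X,\,K_X^{\otimes 2})$ is a genuinely different and attractive route: an anti-affine involution of a nonempty complex affine space fixes the midpoint $P+\tfrac{1}{2}(P'-P)$, with no case analysis on genus. Its cost is that it invokes as black boxes the nonemptiness and affine structure of the space of projective structures, plus the (asserted but unproved) fact that $P\,\longmapsto\, P'$ is affine over the conjugate-linear involution of $H^0(X,\,K_X^{\otimes 2})$; if you want to use it, that last computation with Schwarzian derivatives is the one thing you actually have to write down.
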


\begin{proof}
If $X\,=\, {\mathbb C}{\mathbb P}^1$,
the unique projective structure on ${\mathbb C}{\mathbb P}^1$,
which is defined by taking the identity map of
${\mathbb C}{\mathbb P}^1$ to be a projective coordinate, is
clearly compatible with $\tau$.

Assume that $\text{genus}(X)\, \geq\, 1$. Let
$$
\gamma\, :\, {\widetilde X}\, \longrightarrow\, X
$$
be a universal cover of $X$, where ${\widetilde X}$ is either
the complex line $\mathbb C$ or the upper half plane
${\mathbb H}\, =\, \{z\in {\mathbb C}\mid
\text{Im}(z)>0\}$. Fix a biholomorphism of $\widetilde X$
with $\mathbb C$ or $\mathbb H$. The holomorphic coordinates
on $X$ given by the inclusion of ${\widetilde X}$ in ${\mathbb 
C}{\mathbb P}^1$ define a projective structure on $X$,
which we will denote by $P$. We will
show that $P$ is compatible with $\tau$ Eq. \eqref{tau}.

The involution $\tau$ lifts to an anti--holomorphic automorphism of 
${\widetilde X}$. Any lift
$$
\widetilde{\tau}\, :\, {\widetilde X}\, \longrightarrow\,
{\widetilde X}
$$
of $\tau$ is of the form $\widetilde{\tau}(z) \, =\,
\overline{T(z)}$ (respectively, $\widetilde{\tau}(z) \, =\,
-\overline{T(z)}$) if ${\widetilde X}\,=\, \mathbb C$
(respectively, ${\widetilde X}\,=\,\mathbb H$),
where $T$ is a holomorphic automorphism
of ${\widetilde X}$. Since ${\widetilde X}$ is either
$\mathbb C$ or $\mathbb H$, the map defined by
$z\, \longmapsto\, T(z)$ is a M\"obius transformation. 
Hence the map defined by
$z\, \longmapsto\, \overline{\widetilde{\tau}(z)}$
is a M\"obius transformation. Therefore, the projective structure
$P$ is compatible with $\tau$.
This completes the proof of the lemma.
\end{proof}

Let $V$ be a holomorphic vector bundle over $X$. Let $\overline{V}$
denote the $C^\infty$
complex vector bundle over $X$ which is identified
with $V$ as a real $C^\infty$ vector bundle, while the complex
structure of the fiber $\overline{V}_x$, $x\,\in\, X$, is the conjugate
of the complex structure of $V_x$. The smooth complex vector bundle
$\tau^*\overline{V}$ has a natural holomorphic structure. A $C^\infty$
section of $\tau^*\overline{V}$ defined over an open subset $U\,
\subset\, X$ is holomorphic if the corresponding section of $V$
over $\tau(U)$ is holomorphic; it is easy to check that this
condition defines a holomorphic structure on $\tau^*\overline{V}$.

Let $TX$ denote the holomorphic tangent bundle of $X$. Since
the differential ${\rm d}\tau$ takes the almost complex structure
$J$ to $-J$ (see Eq. \eqref{J}), it follows immediately that
${\rm d}\tau$ gives a $C^\infty$ isomorphism of $TX$ with
$\tau^*\overline{TX}$. It is easy to check that this is a
holomorphic isomorphism. Consequently, $(TX)^{\otimes m}\, =\,
\tau^*\overline{(TX)^{\otimes m}}$ for all $m\,\in\,{\mathbb Z}$.

Fix a holomorphic line bundle $L\, \longrightarrow\, X$
such that $L^{\otimes 2}$ is holomorphically
isomorphic to $TX$. So, $L$ is a theta characteristic on $X$.
Fix an isomorphism of
$L^{\otimes 2}$ with $TX$. We have the following short exact
sequence of vector bundles on $X$
\begin{equation}\label{e1}
0\, \longrightarrow\, L^*\,=\, L\otimes K_X \, \longrightarrow
\, J^1(L) \, \longrightarrow\, L\, \longrightarrow\,0\, ,
\end{equation}
where $J^1(L)$ is the jet bundle, and $K_X$ is the
holomorphic cotangent bundle. Let
\begin{equation}\label{e2}
{\mathbb P}_L\,\longrightarrow\, X
\end{equation}
be the principal $\text{PGL}(2,{\mathbb C})$--bundle defined by
the projective bundle ${\mathbb P}(J^1(L))$.

If $\text{genus}(X)\, >\, 1$, then $J^1(L)$ is indecomposable. If
$\text{genus}(X)\, =\, 1$, then $J^1(L)\,=\, L\oplus L^*$, and if
$\text{genus}(X)\, =\,0$, then $J^1(L)$ is a trivial vector bundle.
Hence by a criterion of Atiyah and Weil, \cite{At1}, \cite{We}, the
vector bundle $J^1(L)$ in Eq. \eqref{e2} admits a holomorphic
connection. Consequently,
the projective bundle ${\mathbb P}_L$ admits a
holomorphic connection. For a holomorphic connection $\nabla$
on ${\mathbb P}_L$, the second fundamental form of the
holomorphic section
of ${\mathbb P}_L$ defined by the subbundle $L\otimes K_X$
in Eq. \eqref{e1} is a section
\begin{equation}\label{e3}
S(\nabla, L\otimes K_X)\, \in\, H^0(X,\, \textit{H}om
(L\otimes K_X,L)\otimes K_X)\,=\, H^0(X,\,{\mathcal O}_X)\, .
\end{equation}

We recall that the projective structures on $X$ are in bijective
correspondence with the holomorphic connections $\nabla$ on
${\mathbb P}_L$ such that the second fundamental form $S(\nabla, 
L\otimes K_X)$ in Eq. \eqref{e3} is the constant function $1$
(see \cite{Gu}).

\begin{lemma}\label{lem2}
The involution $\tau$ in Eq. \eqref{tau} has a canonical lift to
a $C^\infty$ involution of ${\mathbb P}_L$.

A projective structure on $X$ is real if and only if the 
corresponding holomorphic connection on ${\mathbb P}_L$
is preserved by the above involution of ${\mathbb P}_L$.
\end{lemma}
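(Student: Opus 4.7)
The plan is to produce the canonical lift by exploiting the naturality of the constructions that define $\mathbb{P}_L$, and then to translate the involution $P\,\longmapsto\, P'$ on projective structures (introduced just before Lemma~\ref{le1}) into the pullback of connections under that lift.

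For the first assertion, the starting point is the canonical holomorphic isomorphism $TX\, \cong\, \tau^{*}\overline{TX}$ coming from ${\rm d}\tau$, noted just before the statement. Squaring shows that $\tau^{*}\overline{L}$ is again a theta characteristic on $X$, and functoriality of the first jet bundle construction gives a canonical holomorphic isomorphism $\tau^{*}\overline{J^{1}(L)}\, \cong\, J^{1}(\tau^{*}\overline{L})$ compatible with the extension in Eq.~\eqref{e1}. The key input is then to verify that the associated $\text{PGL}(2,{\mathbb C})$--bundle is canonically independent of the choice of theta characteristic, so that $\mathbb{P}(J^{1}(\tau^{*}\overline{L}))$ is canonically isomorphic to $\mathbb{P}_L$; I plan to establish this by characterizing $\mathbb{P}_L$ intrinsically as the bundle whose fibre over $x\,\in\, X$ parametrizes $2$--jets at $x$ of local holomorphic coordinate functions on $X$ modulo the M\"obius group, a description that makes no reference to $L$. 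Combining this identification with the tautological smooth isomorphism between a holomorphic bundle and its conjugate pullback under the antiholomorphic involution $\tau$ produces the required lift $\widetilde{\tau}\, :\, \mathbb{P}_L\, \longrightarrow\, \mathbb{P}_L$, and the relation $\widetilde{\tau}\circ\widetilde{\tau}\, =\, {\rm Id}$ follows from $\tau\circ\tau\, =\, {\rm Id}$ together with functoriality of the ingredients.

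For the second assertion, I will use the correspondence recalled in the text: projective structures on $X$ are in bijection with those holomorphic connections $\nabla$ on $\mathbb{P}_L$ whose second fundamental form with respect to $L\otimes K_X$ equals the constant function $1$. Pullback by $\widetilde{\tau}$, combined with the natural complex conjugation on the Lie algebra of $\text{PGL}(2,{\mathbb C})$ needed to undo the antiholomorphy of $\widetilde{\tau}$, defines an involution on this affine space of connections. Comparing definitions --- the involution $P\,\longmapsto\, P'$ is built from the same local rule $(U,\phi)\,\longmapsto\, (\tau(U),\overline{\phi\circ\tau})$ that underlies $\widetilde{\tau}$ --- the two involutions must agree under Gunning's bijection. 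The second assertion then follows from Lemma~\ref{le1}: reality of $P$ is equivalent to $P\, =\, P'$, which is equivalent to $\widetilde{\tau}$--invariance of the associated connection.

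The main obstacle I anticipate is the canonical independence of $\mathbb{P}_L$ from the choice of theta characteristic $L$, which is what makes the lift globally well defined; once this is in hand, the remaining steps reduce to unwinding the definitions.
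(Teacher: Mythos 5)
Your proposal is correct in its overall architecture and reaches the same reduction as the paper: construct the lift $\widehat{\tau}$ by identifying ${\mathbb P}(J^1(\tau^*\overline{L}))$ with ${\mathbb P}_L$, observe that pullback of connections by this lift realizes the involution $P\,\longmapsto\, P'$ on projective structures under Gunning's correspondence, and then quote Lemma~\ref{le1}. Where you genuinely diverge is in the key identification. The paper does \emph{not} argue that ${\mathbb P}_L$ is independent of the theta characteristic via an intrinsic jet-theoretic description; instead it notes that $L_1\,:=\,\tau^*\overline{L}$ satisfies $L_1^{\otimes 2}\,=\,TX$, hence $L_1\,=\,L\otimes\xi_0$ with $\xi_0$ of order two, and then proves the twisting formula $J^m(\xi\otimes\xi_0)\,=\,J^m(\xi)\otimes\xi_0$ (Eq.~\eqref{e4}) by using the unique holomorphic connection on a finite-order line bundle whose trivializing power carries the trivial connection; projectivizing kills the $\xi_0$--twist and yields ${\mathbb P}_{L_1}\,\cong\,{\mathbb P}_L$. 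This is a short, self-contained computation, whereas your route requires establishing the intrinsic characterization of ${\mathbb P}_L$, which you correctly flag as the main obstacle but leave unproved; note also that as phrased (``$2$--jets of coordinate functions modulo the M\"obius group'') the fibre would collapse to a point, since ${\rm PSL}(2,{\mathbb C})$ acts simply transitively on $2$--jets of immersions into ${\mathbb C}{\mathbb P}^1$ --- what you want is the bundle of such $2$--jets viewed as a principal ${\rm PGL}(2,{\mathbb C})$--bundle under post-composition, and matching that principal bundle with ${\mathbb P}(J^1(L))$ is itself a nontrivial lemma (essentially Gunning's). Your approach, once that characterization is supplied, buys a manifestly canonical lift and makes $\widehat{\tau}\circ\widehat{\tau}\,=\,{\rm Id}$ transparent; the paper's approach is more economical and also furnishes the formula \eqref{e4}, which is reused elsewhere. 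The treatment of the second assertion is at the same level of detail in both arguments.
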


\begin{proof}
Let $\xi\, \longrightarrow\, X$ be a holomorphic line
bundle. Let $\xi_0\, \longrightarrow\, X$ be a holomorphic line
bundle of finite order. We will show that there is a canonical
isomorphism
\begin{equation}\label{e4}
J^m(\xi\otimes\xi_0)\, =\, J^m(\xi)\otimes\xi_0
\end{equation}
for all $m\, \geq\, 0$.

Let $k$ be a positive integer such that $\xi^{\otimes k}_0$ is 
holomorphically trivial. There is a unique connection $\nabla_0$ on
$\xi_0$ such that the connection on $\xi^{\otimes k}_0
\,=\, {\mathcal O}_X$ induced by $\nabla_0$
is the trivial one (it has trivial monodromy).
This connection $\nabla_0$ is independent of the choice of $k$.
Take any $v\, \in\, J^m(\xi)_{x}$ and $\alpha\, \in\, (\xi_0)_x$,
where $x\, \in\, X$. Take a holomorphic section $\widetilde{v}$
of $\xi$ defined around $x$ such that $v$ represents $\widetilde{v}$.
Let $\widetilde{\alpha}$ be the unique flat section, with respect to
the connection $\nabla_0$, of $\xi_0$ defined around $x$ such that
$\widetilde{\alpha}(x) \,=\, \alpha$. Now sending $v\otimes\alpha$
to the element of $J^m(\xi\otimes\xi_0)_x$ representing the
section $\widetilde{v}\otimes \widetilde{\alpha}$ we get a
homomorphism from $J^m(\xi)\otimes\xi_0$ to $J^m(\xi\otimes\xi_0)$.
It is easy to see that this homomorphism is a
holomorphic isomorphism.

Let $L\, \longrightarrow\, X$ be a holomorphic line bundle
such that $L^{\otimes 2}$ is isomorphic to $TX$.
Consider the line bundle $L_1\, :=\, \tau^*\overline{L}$.
Since $L^{\otimes 2}_1\, =\, \tau^*\overline{TX} \,=\, TX$,
there is a line bundle $\xi_0$ of order two such that
$L_1\,=\, L\otimes \xi_0$. Hence from Eq. \eqref{e4},
$$
J^1(L_1)\, =\, J^1(L)\otimes \xi_0\, .
$$
Consequently, there is a canonical isomorphism
$$
{\mathbb P}_{L_1}\, :=\, {\mathbb P}(J^1(L_1))\,
\stackrel{\sim}{\longrightarrow}\,{\mathbb P}_L\, .
$$
This isomorphism gives a $C^\infty$ involution
\begin{equation}\label{et1}
\widehat{\tau}\, :\,{\mathbb P}_L\,\longrightarrow\,
{\mathbb P}_L
\end{equation}
that lifts $\tau$.

For any
point $x\, \in\, X$, the isomorphism $\widehat{\tau}(x)$ of
the fiber $({\mathbb P}_L)_x$ with
$({\mathbb P}_L)_{\tau(x)}$ is anti--holomorphic. In fact,
the pulled back fiber bundle
$\tau^*{\mathbb P}_L$ has a natural holomorphic
structure which is uniquely determined by the following
condition: a section of $\tau^*{\mathbb P}_L$ over an 
open subset $U\, \subset\, X$ is holomorphic if and only
if the corresponding section of ${\mathbb P}_L$ over
$\tau(U)$ is holomorphic. The involution $\widehat{\tau}$
in Eq. \eqref{et1} is a holomorphic isomorphism of
${\mathbb P}_L$ with $\tau^*{\mathbb P}_L$ equipped
with the above holomorphic structure.

Let $P$ be a projective structure on $X$.
Let $\nabla$ be the holomorphic connection on ${\mathbb P}_L$
associated to $P$. Note that any holomorphic connection on a
Riemann surface is flat, because there are no nonzero 
holomorphic two--forms on it (the curvature of a holomorphic 
connection is a holomorphic two--form with values in the
adjoint bundle). Let $\widetilde{P}$ be the
projective structure on $X$ given by $P$ using the involution
in Lemma \ref{le1}. The flat connection on
${\mathbb P}_L$ corresponding to $\widetilde{P}$ coincides
with $\widehat{\tau}^*\nabla$, where
$\widehat{\tau}$ is the involution in Eq. \eqref{et1}.
Therefore, Lemma \ref{le1} completes the proof.
\end{proof}

\section{Theta characteristics and projective structure}\label{sec3}

\subsection{First jet bundle of a theta 
characteristic}\label{3.1}

A \textit{theta characteristic} of $(X\, ,\tau)$ is a
holomorphic line bundle $\theta\, \longrightarrow\, X$
such that
\begin{itemize}
\item $\theta^{\otimes 2}\, =\, K_X$, and

\item the line bundle $\tau^*\overline{\theta}$ is
holomorphically isomorphic to $\theta$.
\end{itemize}

A theta characteristic $\theta$ of $(X\, ,\tau)$ is called
\textit{real} if there is a holomorphic isomorphism
$$
f\, :\, \theta\, \longrightarrow\, \tau^*\overline{\theta}
$$
such that the composition
$$
\theta\, \stackrel{f}{\longrightarrow}\, \tau^*\overline{\theta}
\, \stackrel{\tau^*\overline{f}}{\longrightarrow}\, 
\tau^*\overline{\tau^*\overline{\theta}}\, =\, \theta
$$
is the identity map.

A theta characteristic $\theta$ of $(X\, ,\tau)$ is called
\textit{quaternionic} if there is a holomorphic isomorphism
$$
f\, :\, \theta\, \longrightarrow\, \tau^*\overline{\theta}
$$
such that the composition
$$
\theta\, \stackrel{f}{\longrightarrow}\, \tau^*\overline{\theta}
\, \stackrel{\tau^*\overline{f}}{\longrightarrow}\,
\tau^*\overline{\tau^*\overline{\theta}}\, =\, \theta
$$
is $-\text{Id}_\theta$.

The set of theta characteristics on $(X\, ,\tau)$ decomposes 
into a disjoint union of real and quaternionic theta characteristics.
It is known that $(X\, ,\tau)$ admits a theta
characteristic (see \cite[pp. 61--62]{At2}). We note that if
$\tau$ does not have any fixed points, then there are no real 
holomorphic line bundles of odd degree on $X$. Hence in that
case there are no real theta characteristics provided the
genus of $X$ is even; see \cite{BHH} for detailed discussions
on the existence of real and quaternionic vector bundles, in
particular, line bundles.

Let $\theta$ be a theta characteristic on $(X\, ,\tau)$. Fix a
holomorphic isomorphism
\begin{equation}\label{f}
f\, :\, \theta\, \longrightarrow\, \tau^*\overline{\theta}
\end{equation}
such that the composition
$$
\theta\, \stackrel{f}{\longrightarrow}\, \tau^*\overline{\theta}
\, \stackrel{\tau^*\overline{f}}{\longrightarrow}\,
\tau^*\overline{\tau^*\overline{\theta}}\, =\, \theta
$$
is either $\text{Id}_\theta$ or $-\text{Id}_\theta$.

We have $\bigwedge^2 J^1(\theta^*) \, =\, \theta^*\otimes
\theta^*\otimes K_X \,=\, {\mathcal O}_X$ (see Eq. \eqref{e1}).
We noted in Section \ref{sec2} that $J^1(\theta^*)$ admits a
holomorphic connection.

The isomorphism in Eq. \eqref{f} induces an isomorphism
\begin{equation}\label{wf}
\widehat{f}\, :\, J^1(\theta^*)\, \longrightarrow\, 
\tau^*\overline{J^1(\theta^*)}\, .
\end{equation}
We note that $\tau^*\overline{\widehat{f}}\circ \widehat{f}$
is $\text{Id}_{J^1(\theta^*)}$ (respectively, 
$-\text{Id}_{J^1(\theta^*)}$) if
$\tau^*\overline{f}\circ f$ is $\text{Id}_\theta$ 
(respectively, $-\text{Id}_\theta$). A holomorphic connection
$\nabla$ on $J^1(\theta^*)$ induces a holomorphic connection
on $\tau^*\overline{J^1(\theta^*)}$; this induced connection will
be denoted by $\tau^*{\overline{\nabla}}$.

Let $\mathcal C$ denote the space of all holomorphic connections
$\nabla$ on $J^1(\theta^*)$ such that
\begin{itemize}
\item the isomorphism $\widehat{f}$ is Eq. \eqref{wf} takes $\nabla$
to $\tau^*{\overline{\nabla}}$, and

\item the connection on $\bigwedge^2 J^1(\theta^*)$
induced by $\nabla$ has trivial monodromy.
\end{itemize}

\begin{proposition}\label{prop1}
The space of connections $\mathcal C$ defined above is in bijective
correspondence with the space of all projective structures
on $X$ compatible with $\tau$.
\end{proposition}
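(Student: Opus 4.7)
The plan is to use the correspondence recalled just before Lemma~\ref{lem2}---between projective structures on $X$ and holomorphic connections on $\mathbb{P}_L=\mathbb{P}(J^1(\theta^*))$ whose second fundamental form of the subbundle $L\otimes K_X\subset J^1(\theta^*)$ equals the constant $1$---together with Lemma~\ref{lem2} to handle $\tau$-compatibility, and then to match this picture with the two conditions defining $\mathcal{C}$, via the natural projection from connections on $J^1(\theta^*)$ to connections on $\mathbb{P}_L$.

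First, I would analyze the lifting of $\mathbb{P}_L$-connections to connections on $J^1(\theta^*)$. Any two such lifts differ by a scalar endomorphism $\alpha\cdot\mathrm{Id}$ for some holomorphic $1$-form $\alpha$, and this modification shifts the induced connection on $\bigwedge^2 J^1(\theta^*)=\mathcal{O}_X$ (where the identification comes from Eq.~\eqref{e1}) by the trace $2\alpha$. The trivial-monodromy condition of $\mathcal{C}$ thus selects a distinguished lift. I would then verify that, under this lift, the induced $\mathbb{P}_L$-connection of an element of $\mathcal{C}$ has second fundamental form equal to~$1$; I expect this to follow from the intrinsic structure of the jet bundle, specifically the fact that the extension class of Eq.~\eqref{e1} is a canonical generator of $H^1(X,K_X)\cong\mathbb{C}$.

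Second, I would translate the equivariance condition. The involution $\widehat\tau$ of Eq.~\eqref{et1} is defined by projectivizing the isomorphism $\widehat f$ of Eq.~\eqref{wf} (the $\pm\mathrm{Id}$ sign ambiguity in $\tau^*\overline{\widehat f}\circ\widehat f$ disappearing at the level of $\mathbb{P}$-bundles), so the first condition in the definition of $\mathcal{C}$ descends to $\widehat\tau$-invariance of the induced connection on $\mathbb{P}_L$. Conversely, starting from a $\widehat\tau$-invariant $\mathbb{P}_L$-connection with sff$=1$, its trivial-monodromy lift to $J^1(\theta^*)$ should automatically satisfy $\widehat f(\nabla)=\tau^*\overline\nabla$, since both $\widehat f(\nabla)$ and $\tau^*\overline\nabla$ descend to the same connection on $\mathbb{P}_L$ and induce identical connections on $\bigwedge^2 J^1(\theta^*)=\mathcal{O}_X$, forcing equality by uniqueness of the lift. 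Combining this with Lemma~\ref{lem2} identifies $\mathcal{C}$ with the real projective structures.

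The main obstacle I anticipate is the first step: the definition of $\mathcal{C}$ controls only the determinant, whereas Gunning's correspondence requires the second fundamental form to equal~$1$, and these conditions are a priori independent (the shift by $\alpha\cdot\mathrm{Id}$ preserves the sff but changes the determinant). Reconciling them will require invoking specific features of the jet bundle $J^1(\theta^*)$---most plausibly, the canonical form of its defining exact sequence together with the compatibility of $\widehat f$ with the filtration---to show that the trivial-monodromy lift automatically produces the normalization sff$=1$.
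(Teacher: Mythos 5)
Your proposal follows essentially the same route as the paper's proof --- both directions come down to Lemma~\ref{lem2} together with the passage between connections on ${\mathbb P}_L$ and connections on $J^1(\theta^*)$ normalized by requiring trivial monodromy on $\bigwedge^2 J^1(\theta^*)$ --- and you are in fact more careful than the paper, whose two-paragraph argument addresses neither the second-fundamental-form normalization nor the verification that the distinguished lift of a $\widehat{\tau}$-invariant ${\mathbb P}_L$-connection satisfies $\widehat{f}(\nabla)=\tau^*\overline{\nabla}$ (your uniqueness argument for the latter is the right one, using that the only holomorphic connection on ${\mathcal O}_X$ with trivial monodromy is the trivial one). The obstacle you flag resolves exactly as you guess, at least for $\mathrm{genus}(X)\geq 2$: the extension class of Eq.~\eqref{e1} is a nonzero element of $H^1(X,K_X)\cong{\mathbb C}$, so the connecting homomorphism $H^0(X,{\mathcal O}_X)\to H^1(X,\cdot)$ in the long exact sequence associated to the surjection $\mathrm{End}(J^1(\theta^*))\otimes K_X\to{\mathcal H}om(\theta,\theta^*)\otimes K_X={\mathcal O}_X$ is injective, hence the map $H^0(X,\mathrm{End}(J^1(\theta^*))\otimes K_X)\to H^0(X,{\mathcal O}_X)$ is zero, and therefore every holomorphic connection on ${\mathbb P}_L$ has the same second fundamental form, which equals $1$ under the canonical identification $(\theta^*)^{\otimes 2}=TX$.
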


\begin{proof}
Take any $\nabla\, \in\, {\mathcal C}$. The holomorphic connection 
on ${\mathbb P}(J^1(\theta^*))$ induced by $\nabla$ will be
denoted by $\nabla'$. From Lemma \ref{lem2} it follows that
$\nabla'$ defines a real projective structure on $(X\, ,\tau)$.

Let $P$ be a real projective structure on $(X\, ,\tau)$. Let
$\nabla^P$ be the corresponding holomorphic connection on the
projective bundle ${\mathbb P}(J^1(\theta^*))$. From Lemma \ref{lem2}
we know that $\nabla^P$ is preserved by the
involution of ${\mathbb P}(J^1(\theta^*))$. Since
$\text{Lie}(\text{SL}(2, {\mathbb C}))\, =\,
\text{Lie}(\text{PGL}(2, {\mathbb C}))$, and $\bigwedge^2J^1
(\theta^*)\,=\, {\mathcal O}_X$, the connection
$\nabla^P$ defines a holomorphic connection on
$J^1(\theta^*)$ such that the induced connection on $\bigwedge^2 
J^1(\theta^*)$ has trivial monodromy.
\end{proof}

See \cite{Ty} for a detailed study of projective structures.

\subsection{Trivializations on nonreduced diagonal}

Let $X$ be a compact connected Riemann surface. Let
$$
\Delta\, :=\, \{(x\, ,x)\, \mid \, x\,\in\, X\}\, \subset\, 
X\times X
$$
be the (reduced) diagonal. For any integer $n\, \geq\, 2$, the
nonreduced diagonal with multiplicity $n$ will be denoted by
$n\Delta$. Consider the holomorphic line bundle
$$
{\mathcal L}\,:=\, p^*_1 K_X \otimes p^*_2 K_X\otimes {\mathcal 
O}_{X\times X}(2\Delta)
$$
on $X\times X$, where $p_i$, $i\, =\, 1\, , 2$, is the projection 
of $X\times X$ to the $i$--th factor.

The Poincar\'e adjunction formula identifies the holomorphic 
tangent bundle $TX$ with the restriction of the line 
bundle ${\mathcal O}_{X\times X}(\Delta)$ to $\Delta$ (the
Riemann surface $X$ is identified with $\Delta$ by sending any
$x\, \in\, X$ to $(x\, ,x)\, \in\, \Delta$). In view of
this identification of $TX$ with
${\mathcal O}_{X\times X}(\Delta)\vert_\Delta$, the restriction
${\mathcal L}\vert_\Delta$ gets identified with the trivial line 
bundle ${\mathcal O}_\Delta$ over $\Delta$. Let
\begin{equation}\label{ps0}
\psi_0\, \in\, H^0(\Delta,\, ,{\mathcal L}\vert_\Delta)
\end{equation}
be the section given by the constant function $1$ through the
identification of ${\mathcal L}\vert_\Delta$ with ${\mathcal 
O}_\Delta$.

Consider the holomorphic involution $\sigma$ of $X\times X$ 
defined by $(x\, ,y)\, \longmapsto\, (y\, ,x)$. This involution 
clearly has a natural lift to an involution of the line bundle 
$\mathcal L$. Let
$$
\widetilde{\sigma}\, :\, {\mathcal L}\, \longrightarrow\,
{\mathcal L}
$$
be this involution over $\sigma$.

The following two statements hold:
\begin{itemize}
\item There is a unique section
\begin{equation}\label{ps1}
\psi_1\, \in\, H^0(2\Delta ,\, {\mathcal
L}\vert_{2\Delta})
\end{equation}
invariant under $\widetilde{\sigma}$ such that the restriction of
$\psi_1$ to $\Delta\, \subset\, 2\Delta$ coincides with
$\psi_0$ in Eq. \eqref{ps0}.

\item The space of all projective structures on $X$ is in 
bijective correspondence with the space of sections
$$
\psi\, \in\, H^0(3\Delta ,\, {\mathcal L}\vert_{3\Delta})
$$
satisfying the condition that the restriction of
$\psi$ to $2\Delta\, \subset\, 3\Delta$ coincides with the above 
section $\psi_1$.
\end{itemize}
(See \cite{BR} for the details.)

Now let $\tau$ be an antiholomorphic involution of $X$. It 
produces a holomorphic isomorphism
$$
\tau''\, :\, K_X\, \longrightarrow\, \tau^*\overline{K}_X
$$
that sends any locally defined holomorphic one--form $w$
to $\sigma^*\omega$ (note that the line bundle 
$\tau^*\overline{K}_X$ has a natural holomorphic structure).
Since $\overline{K}_X$ is identified, as a $C^\infty$ line 
bundle, with $K_X$, this isomorphism $\tau''$ produces a 
$C^\infty$ isomorphism
$$
\tau'\, :\, K_X\, \longrightarrow\, X_K
$$
over $\tau$. The isomorphism $\tau'$ is fiberwise conjugate 
linear. This $\tau'$ produces a a $C^\infty$ isomorphism
$$
\tau_2\, :\, {\mathcal L} \longrightarrow\,{\mathcal L} 
$$
over the involution $\tau\times \tau$ of $X\times X$.
The isomorphism $\tau_2$ is fiberwise conjugate linear. This
$\tau_2$ is given by the holomorphic isomorphism of line
bundles
$$
p^*_1K_X \otimes p^*_2K_X\,
\stackrel{p^*_1\tau''\otimes p^*_2\tau''}{\longrightarrow}
\, p^*_1\tau^*\overline{K}_X\otimes p^*_2\tau^*\overline{K}_X
\,=\, (\tau\times \tau)^*(p^*_1\overline{K}_X\otimes 
p^*_2\overline{K}_X)\, .
$$

Let $P$ be a projective structure on $X$ given by a section
$$
\psi\, \in\, H^0(3\Delta ,\, {\mathcal L}\vert_{3\Delta})
$$
satisfying the condition that the restriction of
$\psi$ to $2\Delta\, \subset\, 3\Delta$ coincides with the
section $\psi_1$ in Eq. \eqref{ps1}. The projective structure $P$ 
is compatible with $\tau$ if and only if
$$
\tau_2(\psi)\,=\, \psi\, .
$$
This is straight--forward to check.

\subsection{Differential operators of order two}

As before, $\theta$ is a theta characteristic of $(X\, ,\tau)$.
Let $J^2(\theta^*)\, \longrightarrow\, X$ be the second order jet
bundle. It fits in the following short exact sequence of
holomorphic vector bundles on $X$:
\begin{equation}\label{Je}
0\, \longrightarrow\, K^{\otimes 2}_X\otimes
\theta^* \, =\, \theta^{\otimes 3} \, 
\stackrel{\iota}{\longrightarrow}\,
J^2(\theta^*)\, \stackrel{q}{\longrightarrow}\, J^1(\theta^*)\,
\longrightarrow\, 0\, .
\end{equation}
A holomorphic differential operator $D$
of order two from $\theta^*$ to $K^{\otimes 2}_X\otimes
\theta^*$ is, by definition, an ${\mathcal O}_X$--linear
homomorphism from
$J^2(\theta^*)$ to $K^{\otimes 2}_X\otimes\theta^*$. In other words,
it is a holomorphic section
\begin{equation}\label{D}
D\, \in\, H^0(X,\, K^{\otimes 2}_X\otimes\theta^*\otimes 
J^2(\theta^*)^*)\, .
\end{equation}
Since $D\circ\iota$ is a holomorphic endomorphism of the line bundle
$K^{\otimes 2}_X\otimes\theta^*$, where $\iota$ is the homomorphism in 
Eq. \eqref{Je},
it follows that $D\circ\iota$ is multiplication by a scalar
$\sigma(D)\, \in\, \mathbb C$. This complex number $\sigma(D)$
is called the \textit{symbol} of $D$. So the symbol is a homomorphism
\begin{equation}\label{symb}
\sigma\, :\, H^0(X,\, \text{Diff}^2(\theta^*\, ,K^{\otimes 
2}_X\otimes\theta^*))
\,=\, H^0(X,\, K^{\otimes 2}_X\otimes\theta^*\otimes J^2(\theta^*)^*)
\,\longrightarrow\,\mathbb C\, ,
\end{equation}
where $\text{Diff}^2(\theta^*\, ,K^{\otimes 2}_X\otimes\theta^*)
\,=\, {\mathcal H}om(J^2(\theta^*)\, ,K^{\otimes 2}_X\otimes\theta^*)$ 
is the holomorphic vector bundle on $X$ associated to the
sheaf of differential operators of order two from $\theta^*$ to
$K^{\otimes 2}_X\otimes\theta^*$. Note that any $D$ as in Eq. \eqref{D} 
with $\sigma (D) \, =\, 1$ gives a holomorphic splitting of the short
exact sequence in Eq. \eqref{Je}.

{}From the properties of the jet bundles it follows that
for any holomorphic vector bundle $W$, there is
a natural injective homomorphism
$$
J^{p+q}(W)\,\longrightarrow\, J^p(J^q(W))
$$
for all $p\, ,q\, \geq\, 0$. This fits in a commutative diagram
\begin{equation}\label{dgm}
\begin{matrix}
&&0 && 0\\
&&\Big\downarrow&&\Big\downarrow\\
0& \longrightarrow & K^{\otimes 2}_X\otimes\theta^* &\longrightarrow &
J^2(\theta^*) &\longrightarrow & J^1(\theta^*) &
\longrightarrow & 0\\
&&\Big\downarrow && {~}\,\Big\downarrow\beta && \Vert\\
0& \longrightarrow & K_X\otimes J^1(\theta^*) &\longrightarrow &
J^1(J^1(\theta^*)) &\stackrel{q_0}{\longrightarrow} & J^1(\theta^*)
&\longrightarrow & 0\\
&&\Big\downarrow&&\Big\downarrow\\
&& K_X\otimes\theta^* & = & K_X\otimes\theta^*\\
&&\Big\downarrow&&\Big\downarrow\\
&&0 && 0
\end{matrix}
\end{equation}
where the top horizontal short
exact sequence is the one in Eq. \eqref{Je}, and the
bottom horizontal short exact sequence
is the jet sequence for the vector bundle $J^1(\theta^*)$.
Consider the homomorphism
$$
J^1(J^1(\theta^*))\, \longrightarrow\,J^1(\theta^*)
$$ induced by the projection $J^1(\theta^*)\,\longrightarrow
\, \theta^*$. The vertical
homomorphism $J^1(J^1(\theta^*))\, \longrightarrow\,
K_X\otimes\theta^*$ in Eq. \eqref{dgm} is the difference between this
homomorphism and the projection $J^1(J^1(\theta^*))\, \longrightarrow
\,J^1(\theta^*)$ in Eq. \eqref{dgm}.

Take a holomorphic differential operator
$D$ as in Eq. \eqref{D} such that
$\sigma (D) \, =\, 1$. We noted above that $D$ gives a holomorphic
splitting of Eq. \eqref{Je}. Let
\begin{equation}\label{r.d}
\rho_D\, :\, J^1(\theta^*)\, \longrightarrow\, J^2(\theta^*)
\end{equation}
be the holomorphic homomorphism of vector bundles corresponding to
the splitting of Eq. \eqref{Je} given by $D$. The composition
\begin{equation}\label{ch}
\beta\circ \rho_D\, :\, J^1(\theta^*)\, \longrightarrow\,
J^1(J^1(\theta^*))\, ,
\end{equation}
where $\beta$ is the homomorphism in Eq. \eqref{dgm}, gives
a holomorphic splitting of the bottom short exact sequence in
Eq. \eqref{dgm}. In other words,
$$
q_0\circ \beta\circ \rho_D\,=\, \text{Id}_{J^1(\theta^*)}\, ,
$$
where $q_0$ is the projection in Eq. \eqref{dgm}.

Therefore, $\beta\circ \rho_D$ defines a holomorphic connection
on the vector bundle $J^1(\theta^*)$ (see \cite{At1}). Let
$\nabla(D)$ be this holomorphic connection on $J^1(\theta^*)$
constructed from $D$.

Let
\begin{equation}\label{D2}
{\mathcal D}_X \, \subset\, 
H^0(X,\, \text{Diff}^2(\theta^*\, ,K^{\otimes 2}_X\otimes\theta^*))
\end{equation}
be the space of all second order differential operators
$D$ such that
\begin{itemize}
\item $\sigma(D)\, =\,1$, and
\item the corresponding connection $\nabla(D)$ has the property
that the connection on $\bigwedge^2 J^1(\theta^*)$
induced by $\nabla (D)$ has trivial monodromy.
\end{itemize}

There is a bijective correspondence between the space of
all projective structures on $X$ and ${\mathcal D}_X$
defined in Eq. \eqref{D2}. We recall below the construction
of a differential operator lying in ${\mathcal D}_X$ from
a projective structure on $X$.

Let $P$ be a projective structure on $X$. Let
$$
\phi\, :\, U\, \longrightarrow\, {\mathbb C}\,\subset\,
{\mathbb C}{\mathbb P}^1
$$
be a holomorphic coordinate function compatible with $P$.
Fix a holomorphic section $\omega$ of $\theta^*$ over $U$
such that the section $\omega\otimes\omega$ of
$\theta^*\otimes\theta^*\, =\, TX$ coincides with the vector
field $\frac{\partial}{\partial z}$ on $U$ with defined by
the coordinate function $\phi$. Let $D_U$ be
the second order holomorphic differential operator
$$
\theta^*\vert_U \, \longrightarrow\, (K^{\otimes 
2}_X\otimes\theta^*)\vert_U
$$
defined by
$$
D_U(h\cdot\omega)\, =\, \frac{d^2h}{dz^2}\cdot 
(dz)^{\otimes 2}\otimes\omega\, ,
$$
where $h$ is any holomorphic function on $U$. It is straight--forward
to check that the differential operator $D_U$ is independent of the 
choice of $\phi$ compatible with $P$ (it depends only on $P$).
Therefore, these locally defined differential operators $D_U$
patch together compatibly to define a global differential
operator from $\theta^*$ to $K^{\otimes 2}_X\otimes\theta^*$.

Fix a holomorphic isomorphism $f$ as in Eq. \eqref{f}. Let
$$
f_0\, :\, \theta^*\, \longrightarrow\, \tau^*\overline{\theta^*}
\,=\, \tau^*\overline{\theta}^*
$$
be the isomorphism induced by $f$. This isomorphism $f_0$ induces
an isomorphism
\begin{equation}\label{wf2}
\widehat{f}_2\, :\, J^2(\theta^*)\, \longrightarrow\, 
J^2(\tau^*\overline{\theta^*})\,=\,
\tau^*\overline{J^2(\theta^*)}\, .
\end{equation}
We note that $\tau^*\overline{\widehat{f}_2}\circ \widehat{f}_2$
is $\text{Id}_{J^2(\theta^*)}$ (respectively, 
$-\text{Id}_{J^2(\theta^*)}$) if
$\tau^*\overline{f}\circ f$ is $\text{Id}_\theta$ 
(respectively, $-\text{Id}_\theta$).

The isomorphisms $\widehat{f}_2$ and $f_0$ together
define a conjugate linear automorphism of the vector
space $H^0(X,\, \text{Diff}^2(\theta^*\, ,K^{\otimes 
2}_X\otimes\theta^*))$, which we will describe.

For any homomorphism of vector bundles
$$
F\, :\, J^2(\theta^*)\, \longrightarrow\, K^{\otimes 
2}_X\otimes\theta^*\, =\, \theta^{\otimes 3}\, ,
$$
define $F'\, :=\, \tau^*\overline{f^{\otimes 3}}\circ 
\tau^*\overline{F}\circ\widehat{f}_2$.
It is straight--forward to check that
$$
F'\, :\, J^2(\theta^*)\, \longrightarrow\,\theta^{\otimes 3}
$$
is a holomorphic homomorphism of vector bundles. Hence we have
a conjugate linear homomorphism
\begin{equation}\label{tD}
\tau_D\, :\, H^0(X,\, \text{Diff}^2(\theta^*\, ,K^{\otimes
2}_X\otimes\theta^*))\, \longrightarrow\,
H^0(X,\, \text{Diff}^2(\theta^*\, ,K^{\otimes
2}_X\otimes\theta^*))
\end{equation}
defined by $F\, \longmapsto\, F'$. Since
$\tau^*\overline{\widehat{f}_2}\circ \widehat{f}_2$
and $\tau^*\overline{f}\circ f$ are both 
$\text{Id}$ (respectively, $-\text{Id}$) if
$\tau^*\overline{f}\circ f$ is $\text{Id}$
(respectively, $-\text{Id}$), it follows that
$\tau_D$ in Eq. \eqref{tD} is an involution.

The involution $\tau_D$ clearly preserves the subset
${\mathcal D}_X$ defined in Eq. \eqref{D2}.

\begin{lemma}\label{lem4}
The bijection between the projective structures on $X$ and ${\mathcal 
D}_X$ takes the projective structures on $X$ compatible with $\tau$
surjectively to the fixed point set $({\mathcal
D}_X)^{\tau_D}$.
\end{lemma}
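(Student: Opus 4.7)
My plan is to deduce Lemma \ref{lem4} from Proposition \ref{prop1} by going through the composite assignment $P \mapsto D_P \mapsto \nabla(D_P)$. First I would check — directly in a local projective coordinate $z$, where $D_P$ is the operator $h\omega \mapsto (d^2h/dz^2)(dz)^{\otimes 2}\otimes \omega$ — that the resulting connection $\nabla(D_P)$ on $J^1(\theta^*)$ coincides with the connection associated to $P$ used in Proposition \ref{prop1}. In other words, the composite
\[
\{\text{projective structures}\} \longrightarrow \mathcal{D}_X \stackrel{D\mapsto\nabla(D)}{\longrightarrow} \mathcal{C}
\]
is compatible with the bijection of Proposition \ref{prop1} (suppressing the $\mathcal{C}$ condition means, as always, that one restricts to connections with trivial determinant monodromy). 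Once this is in hand, Lemma \ref{lem4} reduces to the single equivalence
\[
\tau_D(D) = D \iff \nabla(D) \in \mathcal{C}
\]
for $D\in\mathcal{D}_X$, since the defining condition of $\mathcal{C}$ then yields a real projective structure by Proposition \ref{prop1}.

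To prove this equivalence, I would unwind definitions. The relation $\tau_D(D) = D$ reads $D = \tau^*\overline{f^{\otimes 3}}\circ\tau^*\overline{D}\circ\widehat{f}_2$, and this says precisely that $\widehat{f}_2$ identifies the splitting $\rho_D$ of the sequence (\ref{Je}) with the splitting $\tau^*\overline{\rho_D}$ of its $\tau^*$-conjugate; equivalently, $\widehat{f}_2\circ\rho_D = \tau^*\overline{\rho_D}\circ\widehat{f}$. Next I would exploit the naturality of $\beta$ in (\ref{dgm}): this map is built functorially from the canonical inclusion $J^2 \hookrightarrow J^1\circ J^1$ and from jet projections, so it commutes with the isomorphisms induced by $f_0$ together with the canonical identifications $\tau^*\overline{J^k(\theta^*)} = J^k(\tau^*\overline{\theta^*})$. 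Composing $\beta$ with the splitting identity above then gives $\widehat{f}\circ\nabla(D) = \tau^*\overline{\nabla(D)}\circ\widehat{f}$, which is exactly the first defining condition of $\mathcal{C}$. The reverse direction is obtained by reading the same chain of identities from right to left, using that $\beta$ is injective on the image of $\rho_D$.

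The main obstacle is the naturality step for $\beta$: one must check carefully that the vertical homomorphism $J^1(J^1(\theta^*))\to K_X\otimes\theta^*$ in (\ref{dgm}) — defined asymmetrically as the difference of two projections onto $J^1(\theta^*)$ — is intertwined by $\widehat{f}_2$ with its conjugate counterpart, and that the conjugate-linearity of $\widehat{f}$ and $\widehat{f}_2$ is tracked consistently at every stage (the sign ambiguity $\tau^*\overline{\widehat{f}_2}\circ\widehat{f}_2 = \pm\mathrm{Id}$ squares out and therefore does not affect the argument). Granted this bookkeeping, surjectivity onto $(\mathcal{D}_X)^{\tau_D}$ is automatic: any $\tau_D$-fixed $D$ produces, via the equivalence, a connection in $\mathcal{C}$, hence by Proposition \ref{prop1} a real projective structure $P$, and injectivity of the bijection $P\mapsto D_P$ forces $D_P = D$.
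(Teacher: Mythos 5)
Your argument is correct in outline, but it takes a genuinely different route from the paper's. The paper's proof is a two-line equivariance argument entirely on the level of the bijection $P\,\longmapsto\, D_P$: it observes that this bijection intertwines the involution on the space of projective structures from Lemma \ref{le1} with the involution $\tau_D$ of Eq. \eqref{tD} (this is immediate from the local formula $D_U(h\cdot\omega)\,=\,\frac{d^2h}{dz^2}(dz)^{\otimes 2}\otimes\omega$ in a projective coordinate, since replacing $(U,\phi)$ by $(\tau^{-1}(U),\overline{\phi\circ\tau})$ conjugates the formula in exactly the way $\tau_D$ prescribes), and then invokes Lemma \ref{le1} to identify the real structures with the fixed points upstairs; an equivariant bijection carries fixed points onto fixed points. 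You instead factor through the connection $\nabla(D)$ and Proposition \ref{prop1}, which forces you to verify two extra compatibilities: that $\nabla(D_P)$ is the connection attached to $P$ in Proposition \ref{prop1}, and that $\tau_D(D)\,=\,D$ is equivalent to the $\widehat{f}$--invariance of $\nabla(D)$ via the naturality of $\beta$ in Eq. \eqref{dgm}. Both claims are true (the map $\beta$ and the vertical projection to $K_X\otimes\theta^*$ are built from canonical jet constructions, so they do commute with the isomorphisms induced by $f$, and the sign $\tau^*\overline{\widehat f_2}\circ\widehat f_2=\pm\,{\rm Id}$ indeed cancels), so your proof works; what it buys is an explicit reconciliation of the three reality conditions (coordinates, connections, operators), whereas the paper's argument is shorter and does not need Proposition \ref{prop1} at all. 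The one point to watch in your version is the direction ``$P$ real $\Rightarrow\nabla(D_P)\in{\mathcal C}$'': this requires knowing that the connection Proposition \ref{prop1} associates to a real $P$ is $\nabla(D_P)$ itself and not merely another connection with trivial determinant monodromy inducing the same projective connection, which is precisely the compatibility computation you defer to a local check; it should be carried out, since it is the only place where your reduction could fail.
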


\begin{proof}
The bijection between the projective structures on $X$ and ${\mathcal
D}_X$ takes the involution on the space of projective structures
given by $\tau$ (see Lemma \ref{le1}) to the involution $\tau_D$
of ${\mathcal D}_X$. In view of this, the lemma follows from
Lemma \ref{le1}.
\end{proof}

\section{Projective structure
and representation of fundamental group}

So far we considered projective structures on a fixed Riemann
surface. We will now consider all projective structures without
fixing the underlying complex structure. So the definitions
in Section \ref{sec2} have to be modified accordingly, which
we do below.

Let $Y$ be a compact connected oriented $C^\infty$ surface.

A $C^\infty$ \textit{coordinate function} on $Y$
is a pair $(U\, ,\phi)$, where $U$ is an open subset
of $Y$, and $\phi\, :\, U\, \longrightarrow\,
{\mathbb C}{\mathbb P}^1$ is an orientation
preserving $C^\infty$ embedding.
 
A projective structure on $Y$ is defined by giving a collection
$\{(U_i\, , \phi_i)\}_{i\in I}$ of $C^\infty$ coordinate
functions such that
\begin{itemize}
\item $\bigcup_{i\in I} U_i\, =\, Y$, and

\item for each pair $i,i'\,\in\, I$, the composition
\[
\phi_{i'}\circ \phi^{-1}_i\, :\, \phi_i(U_i\cap U_{i'})
\, \longrightarrow\, \phi_{i'}(U_i\cap U_{i'})
\]
is the restriction of some M\"obius transformation.
\end{itemize}

Two such data $\{(U_i\, , \phi_i)\}_{i\in I}$ and
$\{(U_i\, , \phi_i)\}_{i\in I'}$ satisfying the above
conditions
are called {\it equivalent} if their union
$\{U_i\, , \phi_i\}_{i\in I\cup I'}$ also satisfies the
two conditions. A \textit{projective structure} on
$Y$ is an equivalence class of such data.

Therefore, a projective structure on $Y$ gives a complex
structure on $Y$ and a projective structure on the
corresponding Riemann surface.

Given a projective structure on $Y$, the
coordinate functions compatible with it will
be called \textit{projective coordinates}.

Let $\text{Diffeo}_0(Y)$ denote the group of all diffeomorphisms
of $Y$ homotopic to the identity map. The group $\text{Diffeo}_0(Y)$
has a natural action on the space of all
projective structures on $Y$.

\begin{definition}\label{defP}
{\rm The quotient by $\text{Diffeo}_0(Y)$ of the space of all
projective structures on $Y$ will be denoted by
${\mathcal P}_0(Y)$.}
\end{definition}

The isomorphism classes of flat
principal $\text{PSL}(2,{\mathbb
C})$--bundle on $Y$ are identified with the equivalence classes of
representations
\begin{equation}\label{R}
{\mathcal R}\, :=\,
\text{Hom}(\pi_1(Y)\, , \text{PSL}(2,{\mathbb C}))/
\text{PSL}(2,{\mathbb C})\, ,
\end{equation}
which, using complex structure of the group $\text{PSL}(2,
{\mathbb C})$, has a natural structure of a complex analytic space;
the irreducible representations form an open subset contained in
the smooth locus, and this open subset has a natural holomorphic
symplectic structure \cite{Go}. Note that
the equivalence classes of homomorphisms from
$\pi_1(Y)$ are independent of the choice of the base
point in $Y$ needed to define the fundamental group; hence
we omit the base point from the notation. 

A projective structure $P$ on $Y$ gives a flat
principal $\text{PSL}(2,{\mathbb C})$--bundle on $Y$. We
will briefly recall the construction of this flat principal 
bundle. Take a $\{(U_i\, , \phi_i)\}_{i\in I}$ giving a
projective structure on $Y$.
On each $U_i$, consider the trivial principal 
$\text{PSL}(2,{\mathbb C})$--bundle
$$
E^i_{\text{PSL}(2,{\mathbb C})}\, :=\, U_i\times 
\text{PSL}(2,{\mathbb C})\, .
$$
Note that this trivial bundle has a natural flat connection
given by the constant sections of the principal bundle. For
any ordered pair $i\, ,j \, \in\, I$ such that $U_i\bigcap U_j
\, \not=\, \emptyset$, glue
the two principal $\text{PSL}(2,{\mathbb C})$--bundles 
$E^i_{\text{PSL}(2,{\mathbb 
C})}$ and $E^j_{\text{PSL}(2,{\mathbb C})}$ over the 
open subset $U_i\bigcap U_j$ using the element of
$\text{PSL}(2,{\mathbb C})$ given by $\phi_j\circ \phi^{-1}_i$.
(Recall that $\phi_j\circ \phi^{-1}_i$ is the restriction of
a M\"obius transformation, and the group of M\"obius 
transformations is identified with $\text{PSL}(2,{\mathbb C})$;
hence $\phi_j\circ \phi^{-1}_i$ gives an element of
$\text{PSL}(2,{\mathbb C})$.) This was way we get a
principal $\text{PSL}(2,{\mathbb C})$--bundle on $Y$. Since
the transition functions are constants, the natural connection
on the trivial principal bundles $\{E^i_{\text{PSL}(2,{\mathbb 
C})}\}_{i\in I}$ patch together compatibly to define a
flat connection on the principal $\text{PSL}(2,{\mathbb 
C})$--bundle over $Y$.

Sending a flat $\text{PSL}(2,{\mathbb C})$--connection to its
monodromy homomorphism, we get a bijection between $\mathcal R
R$ (defined in \eqref{R}) and the isomorphism classes of
flat $\text{PSL}(2,{\mathbb C})$--bundles. Consider
${\mathcal P}_0(Y)$ (see Definition \ref{defP}).
Let
\begin{equation}\label{mu}
\mu\, :\, {\mathcal P}_0(Y)\, \longrightarrow\,
{\mathcal R}
\end{equation}
be the map that sends any projective structure to the
monodromy of the corresponding flat $\text{PSL}(2,
{\mathbb C})$ connection. It is known that the map $\mu$
is injective, and furthermore, its image is an open subset of
$\mathcal R$ contained in the locus of irreducible
representations \cite{He}, \cite{Hu}. Therefore, 
${\mathcal P}_0(Y)$ is a complex manifold; the complex
structure is uniquely determined by the condition that
the map $\mu$ is holomorphic.

The holomorphic symplectic form on the locus of irreducible
representations defines a holomorphic symplectic form on
${\mathcal P}_0(Y)$.

Let
\begin{equation}\label{R1}
{\mathcal R}_P\, \subset\, {\mathcal R}
\end{equation}
be the open subset defined by the projective structures on $Y$.

Let
\begin{equation}\label{t}
\tau\, :\, Y\, \longrightarrow\, Y
\end{equation}
be an orientation reversing diffeomorphism such that
$$
\tau\circ\tau\,=\, \text{Id}_Y\, .
$$

Imitating Definition \ref{def1}, we define the following:

\begin{definition}\label{de0}
{\rm A projective structure on $Y$ is said to be {\it compatible}
with $\tau$ if for each projective coordinate $(U\, ,\phi)$,
the composition $(\tau(U)\, , \overline{\phi\circ\tau})$ is
also a projective coordinate.}
\end{definition}

Our aim in this section is to identify the subset of ${\mathcal R}_P$
(see Eq. \eqref{R1}) that corresponds to the projective structures
compatible with $\tau$.

Fix a base point $y_0\, \in\, Y$ such that $\tau(y_0)\, \not=\,
y_0$. We recall from \cite{BHH} an extension of
${\mathbb Z}/2{\mathbb Z}$ by $\pi_1(Y,y_0)$.

Let $\Gamma$ denote the space of all homotopy classes, with fixed
end points, of continuous
paths $\gamma\, :\, [0\, ,1] \, \longrightarrow\, Y$ such that
\begin{itemize}
\item{} $\gamma(0)\, =\, y_0$, and
\item $\gamma(1)\, \in\, \{y_0\, ,\tau(y_0)\}$.
\end{itemize}
So $\Gamma$
is a disjoint union of $\pi_1(Y,y_0)$ and $Path(Y,y_0)$
(the homotopy classes of paths from $y_0$ to $\tau(y_0)$).
We recall below the group structure of $\Gamma$. For
$\gamma_1\, \in\,\pi_1(Y,y_0)$, we have the
usual
composition $\gamma_2\gamma_1\, =\, \gamma_2\circ \gamma_1$
of paths. If $\gamma_1\, \in\, Path(Y,y_0)$, then define
$$
\gamma_2\gamma_1\, :=\, \sigma(\gamma_2)\circ \gamma_1\, .
$$
Therefore, we have a short exact sequence of groups
\begin{equation}\label{sg}
e\, \longrightarrow\,\pi_1(Y,y_0)\, \longrightarrow\,
\Gamma\, \longrightarrow\, {\mathbb Z}/2{\mathbb Z}
\, \longrightarrow\, e\, .
\end{equation}

Let $\mathcal G$ denote the set of all diffeomorphisms
of ${\mathbb C}{\mathbb P}^1$ that are either holomorphic
or anti--holomorphic. Note that $\mathcal G$ is a group
under the composition of maps. Therefore, we have a short
exact sequence of groups
\begin{equation}\label{sg2}
e\, \longrightarrow\,\text{PSL}(2,{\mathbb C})\, \longrightarrow
\,{\mathcal G}\, \longrightarrow\, {\mathbb Z}/2{\mathbb Z}
\, \longrightarrow\, e\, .
\end{equation}

\begin{proposition}\label{prop2}
The subset of ${\mathcal R}_P$ defined by
the projective structures on $Y$ compatible with $\tau$
consists of those homomorphisms
$$
\rho\, :\, \pi_1(Y,y_0)\, \longrightarrow\,
{\rm PSL}(2,{\mathbb C})
$$
in ${\mathcal R}_P$ (see Eq. \eqref{R1}) that extend to a
homomorphism
$$
\widetilde{\rho}\, :\, \Gamma\, \longrightarrow\,
{\mathcal G}
$$
fitting in the following commutative diagram
$$
\begin{matrix}
e & \longrightarrow &\pi_1(Y,y_0)& \longrightarrow &
\Gamma & \longrightarrow & {\mathbb Z}/2{\mathbb Z}
& \longrightarrow & e\\
&&~\Big\downarrow \rho &&~\Big\downarrow\widetilde{\rho}
&& \Vert\\
e & \longrightarrow &{\rm PSL}(2,{\mathbb C})& \longrightarrow
&{\mathcal G}& \longrightarrow & {\mathbb Z}/2{\mathbb Z}
& \longrightarrow & e
\end{matrix}
$$
(see Eq. \eqref{sg} and Eq. \eqref{sg2}).
\end{proposition}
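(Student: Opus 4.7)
The plan is to work through developing maps. A projective structure $P$ on $Y$ endows $Y$ with a compatible complex structure, making it a Riemann surface $X$, and pulling $P$ back to the universal cover $\pi\colon \widetilde{X}\,\longrightarrow\, X$ yields a holomorphic developing map $D\colon \widetilde{X}\,\longrightarrow\,{\mathbb C}{\mathbb P}^1$, unique up to post-composition with an element of ${\rm PSL}(2,{\mathbb C})$ and equivariant for the monodromy: $D(\gamma\cdot \widetilde{y})\,=\,\rho(\gamma)\cdot D(\widetilde{y})$ for all $\gamma\,\in\, \pi_1(Y,y_0)$. If $P$ is compatible with $\tau$ in the sense of Definition \ref{de0}, then $\overline{\phi\circ\tau}$ is holomorphic for every projective coordinate $\phi$, so $\tau$ is anti-holomorphic on $X$, and I fix an anti-holomorphic lift $\widetilde{\tau}\colon \widetilde{X}\,\longrightarrow\,\widetilde{X}$ together with a lift $\widetilde{y_0}$ of the base point $y_0$.

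The key observation is that $\tau$-compatibility of $P$ is equivalent to the assertion that $\widetilde{y}\,\longmapsto\,\overline{D(\widetilde{\tau}(\widetilde{y}))}$ is again a developing map of $P$: locally this is the composition of a projective coordinate of $P$ with complex conjugation on ${\mathbb C}{\mathbb P}^1$, which by Definition \ref{de0} is another projective coordinate of $P$. Uniqueness then produces $A\,\in\, {\rm PSL}(2,{\mathbb C})$ with $\overline{D\circ\widetilde{\tau}}\,=\, A\cdot D$, equivalently $D\circ\widetilde{\tau}\,=\, B\circ D$ where $B\,:=\, c\circ A\,\in\, {\mathcal G}$ is anti-holomorphic, $c$ denoting complex conjugation on ${\mathbb C}{\mathbb P}^1$. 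For the forward direction I define $\widetilde{\rho}\colon \Gamma\,\longrightarrow\,{\mathcal G}$ by lifting each $\gamma\,\in\,\Gamma$ to a path in $\widetilde{X}$ starting at $\widetilde{y_0}$: on $\pi_1(Y,y_0)$ set $\widetilde{\rho}\,=\,\rho$, and for $\gamma$ ending at $\tau(y_0)$ write the lifted endpoint uniquely as $g\cdot\widetilde{\tau}(\widetilde{y_0})$ for some deck transformation $g$ and set $\widetilde{\rho}(\gamma)\,=\, \rho(g)\circ B\,\in\, {\mathcal G}$. By construction this fits into the required commutative diagram of short exact sequences.

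For the converse, any extension $\widetilde{\rho}$ produces an anti-holomorphic element $B\,:=\,\widetilde{\rho}(\gamma_0)\,\in\,{\mathcal G}$ for any chosen $\gamma_0\,\in\,\Gamma\setminus\pi_1(Y,y_0)$, and the homomorphism property of $\widetilde{\rho}$ forces the equivariance $B\circ\rho(\gamma)\circ B^{-1}\,=\,\rho(\widetilde{\tau}\gamma\widetilde{\tau}^{-1})$ on $\pi_1(Y,y_0)$. By injectivity of $\mu$ in Eq. \eqref{mu}, $\rho\,=\,\widetilde{\rho}\vert_{\pi_1(Y,y_0)}$ corresponds to a unique projective structure $P\,\in\,{\mathcal P}_0(Y)$ with developing map $D$. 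After normalizing $D$ so that it takes a chosen reference value at $\widetilde{y_0}$, the maps $D\circ\widetilde{\tau}$ and $B\circ D$ share the same $\pi_1$-equivariance and agree at $\widetilde{y_0}$, hence agree everywhere on $\widetilde{X}$. The resulting identity $D\circ\widetilde{\tau}\,=\, B\circ D$ yields, as in the forward direction, the $\tau$-compatibility of $P$.

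The main obstacle is verifying that the prescription $\widetilde{\rho}$ in the forward direction respects the twisted composition law $\gamma_2\gamma_1\,=\,\sigma(\gamma_2)\circ\gamma_1$ from Eq. \eqref{sg} when $\gamma_1$ ends at $\tau(y_0)$. This reduces to the identity that $\widetilde{\tau}\circ(g\,\cdot\,)\circ\widetilde{\tau}^{-1}$ is a deck transformation with monodromy $B\rho(g)B^{-1}$, a consequence of $D\circ\widetilde{\tau}\,=\, B\circ D$ together with the equivariance of $D$; granted this, the homomorphism property of $\widetilde{\rho}$ is a matter of careful bookkeeping of lifted endpoints in $\widetilde{X}$.
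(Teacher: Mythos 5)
Your forward direction is a sound alternative to the paper's: where the paper works with the flat projective bundle $({\mathbb P}_P,\nabla)$, its canonical lift $\widehat{\tau}$, and parallel transport along paths from $y_0$ to $\tau(y_0)$, you work with a developing map $D$ and the relation $D\circ\widetilde{\tau}\,=\,B\circ D$; these are equivalent bookkeeping devices, and your reduction of the homomorphism property to the conjugation identity for deck transformations is the right (if unverified) reduction. The problem is in your converse. The step ``$D\circ\widetilde{\tau}$ and $B\circ D$ share the same $\pi_1$--equivariance and agree at $\widetilde{y_0}$, hence agree everywhere'' is a non sequitur: two (anti--)holomorphic equivariant local diffeomorphisms $\widetilde{X}\to{\mathbb C}{\mathbb P}^1$ with the same equivariance and the same value at one point need not coincide --- there is no identity principle available, since agreement at a single point is nothing like agreement on an open set, and equivariance only constrains the maps along an orbit. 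Moreover the normalization you invoke is not available: once $\rho$ is irreducible, post--composition of $D$ by an element of ${\rm PSL}(2,{\mathbb C})$ changes $\rho$ unless that element is central, so you cannot independently arrange $D(\widetilde{\tau}(\widetilde{y_0}))\,=\,B(D(\widetilde{y_0}))$. You have invoked the injectivity of $\mu$ only to say that $\rho$ determines $P$, which is not where it is needed.

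The correct use of injectivity --- and the paper's actual argument --- is one level up: $\overline{D\circ\widetilde{\tau}}$ is a developing map for the projective structure $\overline{P}$ obtained from $P$ by the involution of Lemma \ref{le1}, with holonomy the representation $\overline{\rho}_\tau$ of Eq. \eqref{r6}; the existence of the anti--holomorphic element $B\,=\,\widetilde{\rho}(\gamma_0)$ and the relation $B\rho(\gamma)B^{-1}\,=\,\rho(\gamma_0\gamma\gamma_0^{-1})$ say precisely that $\overline{\rho}_\tau$ and $\rho$ define the same point of $\mathcal R$. Injectivity of $\mu$ then gives $\overline{P}\,=\,P$ in ${\mathcal P}_0(Y)$, which is the compatibility with $\tau$. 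If you replace your pointwise--agreement step by this argument, the converse goes through; as written, it does not.
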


\begin{proof}
Let $P$ be a projective structure on $Y$ compatible with $\tau$.
Let
$$
({\mathbb P}_P\, ,\nabla)\, \longrightarrow\, Y
$$
be the flat projective bundle of relative (complex)
dimension one associated to $P$. 
We also have a lift of the automorphism
$\tau$ to a $C^\infty$ involution
\begin{equation}\label{tw2}
\widehat{\tau}\,:\, \, {\mathbb P}_P\longrightarrow\,
{\mathbb P}_P
\end{equation}
that preserves the connection $\nabla$
(see Lemma \ref{lem2} and Eq. \eqref{et1}).

Fix a holomorphic isomorphism
\begin{equation}\label{z0}
\zeta_0\, :\, {\mathbb C}{\mathbb P}^1\,
\longrightarrow\, ({\mathbb P}_P)_{y_0}
\end{equation}
of the fiber $({\mathbb P}_P)_{y_0}$ with
the projective line.

For any $\gamma\,\in\,\pi_1(Y,y_0)$, consider the parallel
translation
$$
T(\nabla, \gamma)\, :\,({\mathbb P}_P)_{y_0}\,
\longrightarrow\,({\mathbb P}_P)_{y_0}
$$
along $\gamma$ for the connection $\nabla$.
We have the monodromy homomorphism
for the connection $\nabla$
\begin{equation}\label{r1}
\rho\, :\, \pi_1(Y,y_0)\, \longrightarrow\,
{\rm PSL}(2, {\mathbb C})
\end{equation}
defined by $\gamma\, \longmapsto\,
\zeta^{-1}_0\circ T(\nabla, \gamma)\circ\zeta_0$,
where $\zeta_0$ is the fixed isomorphism in Eq. \eqref{z0}.

Now, take any element $\gamma\,\in\, Path(Y,y_0)$, so
$\gamma$
is a homotopy class of paths from $y_0$ to $\tau(y_0)$.
Let
$$
T(\nabla, \gamma)\, :\,({\mathbb P}_P)_{y_0}\,
\longrightarrow\,({\mathbb P}_P)_{\tau(y_0)}
$$
be the isomorphism of fibers obtained by taking parallel
translation along $\gamma$ for the connection $\nabla$.
Consider the diffeomorphism
\begin{equation}\label{r2}
g_\gamma\, :=\,
\zeta^{-1}_0\circ
\widehat{\tau}_{\tau(y_0)}\circ T(\nabla, \gamma)\circ\zeta_0
\end{equation}
of ${\mathbb C}{\mathbb P}^1$, where
$$
\widehat{\tau}_{\tau(y_0)}\, :\,
({\mathbb P}_P)_{\tau(y_0)}\,\longrightarrow\,
({\mathbb P}_P)_{y_0}
$$
is the restriction of the diffeomorphism
$\widehat{\tau}$ in Eq. \eqref{tw2}. Note that
the diffeomorphism $g_\gamma$ in Eq.
\eqref{r2} is anti--holomorphic,
because $\widehat{\tau}$ is anti--holomorphic, while
$\zeta_0$ and $T(\nabla, \gamma)$ are both holomorphic
isomorphisms.

Let
\begin{equation}\label{r3}
\rho'\, :\, Path(Y,y_0)\, \longrightarrow\,{\mathcal G}
\end{equation}
be the map defined by $\gamma\, \longmapsto\, g_\gamma$,
where $\mathcal G$ is defined in Eq. \eqref{sg2}, and
$g_\gamma$ is constructed in Eq. \eqref{r2}.
The two maps $\rho$ and $\rho'$ constructed in
Eq. \eqref{r1} and Eq. \eqref{r3} respectively
together define a homomorphism
$$
\widetilde{\rho}\, :\, \Gamma\, \longrightarrow\,
{\mathcal G}
$$
that fits in the commutative diagram in the statement
of the proposition.

To prove the converse, let
\begin{equation}\label{r5}
\rho\, :\, \pi_1(Y,y_0)\,\longrightarrow\,
{\rm PSL}(2,{\mathbb C})
\end{equation}
be a homomorphism that lies in 
${\mathcal R}_P$ (defined in Eq. \eqref{R1}).
Let $P$ be the
projective structure on $Y$ defined by $\rho$.

The self--map $\tau$ in Eq. \eqref{t} defines
an isomorphism
$$
\tau_*\, :\, \pi_1(Y,y_0)\,\longrightarrow\,
\pi_1(Y,\tau(y_0))
$$
that sends any loop to its image by $\tau$. Let
\begin{equation}\label{r6}
\overline{\rho}_\tau\, :\, \pi_1(Y,\tau(y_0))
\,\longrightarrow\,{\rm PSL}(2,{\mathbb C})
\end{equation}
be the homomorphism defined by
$g\, \longmapsto\, \overline{\rho(\tau^{-1}_*(g))}$.

Let $\overline{P}$ be the projective structure on $Y$
corresponding to the projective structure $P$ by the
involution in Lemma \ref{le1}. This projective
structure $\overline{P}$ is given by the homomorphism
$\overline{\rho}_\tau$ constructed in Eq. \eqref{r6}.

Assume that there is a homomorphism
\begin{equation}\label{r4}
\widetilde{\rho}\,:\,\Gamma\,\longrightarrow\,{\mathcal G}
\end{equation}
that fits in the commutative diagram
$$
\begin{matrix}
e & \longrightarrow &\pi_1(Y,y_0)& \longrightarrow &
\Gamma & \longrightarrow & {\mathbb Z}/2{\mathbb Z}
& \longrightarrow & e\\
&&~\Big\downarrow \rho &&~\Big\downarrow\widetilde{\rho}
&& \Vert\\
e & \longrightarrow &
{\rm PSL}(2,{\mathbb C})& \longrightarrow
&{\mathcal G}& \longrightarrow & {\mathbb Z}/2{\mathbb Z}
& \longrightarrow & e
\end{matrix}
$$
(as in the statement of the proposition).

Take any element $g_0\, \in\, \mathcal G$ that projects
to the generator of ${\mathbb Z}/2{\mathbb Z}$. Let
$$
\rho(g_0)\, :\, \pi_1(Y,y_0)\, \longrightarrow\,
{\rm PSL}(2,{\mathbb C})
$$
be the homomorphism defined by $\gamma\,\longmapsto\,
\gamma^{-1}_0 \rho(\gamma) g_0$. One can check that
the element in $\mathcal R$ (see Eq. \eqref{R})
given by $\rho(g_0)$ coincides with the element given by
$\overline{\rho}_\tau$ constructed in Eq. \eqref{r6}.

Now from the injectivity of the map $\mu$ in Eq. \eqref{mu}
we conclude that the projective structure $P$ coincides with
the projective structure $\overline{P}$. This completes the
proof of the proposition.
\end{proof}

\section{Projective structure and symplectic form}

We continue with the notation of the previous section. Let
${\mathcal C}(Y)$ denote the space of all complex structures
on $Y$ compatible with the orientation of $Y$. The group
$\text{Diffeo}_0(Y)$ has a natural action on ${\mathcal C}(Y)$.
The quotient
$$
{\mathcal T}(Y)\, :=\, {\mathcal C}(Y)/\text{Diffeo}_0(Y)
$$
is the Teichm\"uller space, which has a natural complex structure.
The involution $\tau$ (see Eq. \eqref{t}) gives an involution
of ${\mathcal T}(Y)$ that sends any almost complex structure
$J$ on $Y$ to $({\rm d}\tau)^{-1}\circ J\circ {\rm d}\tau$, where 
${\rm d}\tau$ is the differential of $\tau$. This involution of
${\mathcal T}(Y)$ is anti--holomorphic.

Consider ${\mathcal P}_0(Y)$ introduced
in Definition \ref{defP}. Let
\begin{equation}\label{psi}
\varphi\, :\, {\mathcal P}_0(Y)\, \longrightarrow\, {\mathcal T}(Y)
\end{equation}
be the forgetful map that sends a projective structure to the
underlying complex structure. It is known that $\varphi$ makes
${\mathcal P}_0(Y)$ a holomorphic fiber bundle over ${\mathcal T}(Y)$.
More precisely, ${\mathcal P}_0(Y)$ is a holomorphic torsor for
the holomorphic cotangent bundle $\Omega^1_{{\mathcal T}(Y)}
\,\longrightarrow\, {\mathcal T}(Y)$;
this means that there is a natural holomorphic map
\begin{equation}\label{cA}
{\mathcal A}\, :\, 
{\mathcal P}_0(Y)\times_{{\mathcal T}(Y)} \Omega^1_{{\mathcal T}(Y)}
\, \longrightarrow\, {\mathcal P}_0(Y)\, ,
\end{equation}
{}from the fiber product, with the property that the
restriction of $\mathcal A$ over any point $t\, \in\, {\mathcal T}(Y)$
is a free transitive action of the cotangent space
$(\Omega^1_{{\mathcal T}(Y)})_t$
on the fiber $({\mathcal P}_0(Y))_t$. Therefore, given any
holomorphic section of the map $\varphi$
\begin{equation}\label{S}
s\, :\, {\mathcal T}(Y)\, \longrightarrow\,
{\mathcal P}_0(Y)\, ,
\end{equation}
we get a holomorphic isomorphism
\begin{equation}\label{cI}
I_s\, :\, \Omega^1_{{\mathcal T}(Y)}\, \longrightarrow\,
{\mathcal P}_0(Y)
\end{equation}
that sends any $v\, \in\, (\Omega^1_{{\mathcal T}(Y)})_t$
to ${\mathcal A}(s(t)\, ,v)$, where ${\mathcal A}$ is the
map in Eq. \eqref{cA}.

Recall that ${\mathcal P}_0(Y)$ has a natural symplectic form which
is obtained by pulling back, by the map $\mu$ in Eq. \eqref{mu},
the natural symplectic form on the smooth locus of $\mathcal R$.
It is known that there are holomorphic sections $s$ as in
Eq. \eqref{S} such that the corresponding map $I_s$ in Eq. \eqref{cI}
takes the Liouville symplectic form on $\Omega^1_{{\mathcal T}(Y)}$
to the natural symplectic form on ${\mathcal P}_0(Y)$
(see \cite{Ka}, \cite{AB}).

Recall that both ${\mathcal T}(Y)$ and ${\mathcal P}_0(Y)$ are
equipped with anti--holomorphic involutions constructed using
$\tau$. Let $\tau_T$ (respectively, $\tau_P$) be the
anti--holomorphic involution of ${\mathcal T}(Y)$
(respectively, ${\mathcal P}_0(Y)$) given by $\tau$.

\begin{lemma}\label{lem6}
There is a holomorphic section
$s\, :\, {\mathcal T}(Y)\, \longrightarrow\,
{\mathcal P}_0(Y)$ of the projection $\varphi$ in
Eq. \eqref{psi} such that
\begin{itemize}
\item $s\circ \tau_T\, =\, \tau_P\circ s$, and

\item the map $I_s$ in Eq. \eqref{cI}
takes the Liouville symplectic form on $\Omega^1_{{\mathcal T}(Y)}$
to the natural symplectic form on ${\mathcal P}_0(Y)$.
\end{itemize}
\end{lemma}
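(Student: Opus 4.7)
The plan is to start from a known symplectic section and average against the involution to make it $\tau$--equivariant. First, I take a holomorphic section $s_0 : \mathcal{T}(Y) \to \mathcal{P}_0(Y)$ of $\varphi$ produced in \cite{Ka} or \cite{AB} for which $I_{s_0}$ intertwines the Liouville symplectic form $\omega_L$ on $\Omega^1_{\mathcal{T}(Y)}$ with the natural symplectic form $\omega_P$ on $\mathcal{P}_0(Y)$. Define the involution $\iota$ on the set of holomorphic sections of $\varphi$ by $\iota(s)(t) := \tau_P(s(\tau_T(t)))$; this is holomorphic because it is the composition of three maps, two anti--holomorphic ($\tau_T$ and $\tau_P$) and one holomorphic ($s$), and it is a section because $\varphi\circ \tau_P = \tau_T\circ \varphi$. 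Set $s_1 := \iota(s_0)$.

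The second step is to verify that $s_1$ is also symplectic. The anti--holomorphic involution $\tau_T$ admits a canonical lift $\tau_T^\sharp$ to the total space of $\Omega^1_{\mathcal{T}(Y)}$ (acting on cotangent vectors by pullback), and from the definitions one obtains the transformation laws $(\tau_T^\sharp)^*\omega_L = \overline{\omega_L}$ and $\tau_P^*\omega_P = \overline{\omega_P}$; the latter reduces via the monodromy map $\mu$ (see Eq.~\eqref{mu}) to the corresponding transformation property of Goldman's symplectic form on $\mathcal{R}$ under the real structure induced by $\tau$. A direct comparison of definitions gives the identity $\tau_P\circ I_{s_0} = I_{s_1}\circ \tau_T^\sharp$, and combining with the two transformation laws yields $I_{s_1}^*\omega_P = \omega_L$, as required.

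Using the torsor structure, set $\eta := s_1 - s_0 \in H^0(\mathcal{T}(Y), \Omega^1_{\mathcal{T}(Y)})$. For any holomorphic $1$--form $\alpha$, translating $s_0$ by $\alpha$ alters $I_{s_0}^*\omega_P$ by $\pi^* d\alpha$, where $\pi : \Omega^1_{\mathcal{T}(Y)} \to \mathcal{T}(Y)$ is the projection; since both $s_0$ and $s_1$ are symplectic, this forces $d\eta = 0$. The involution $\iota$ acts on the affine part of the space of sections through an anti--linear involution $T$ of $H^0(\mathcal{T}(Y), \Omega^1_{\mathcal{T}(Y)})$, and $\iota^2(s_0) = s_0$ implies $\eta + T(\eta) = 0$. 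Now set $s := s_0 + \eta/2$. Anti--linearity of $T$ gives $T(\eta/2) = -\eta/2$, hence $\iota(s) = s_0 + \eta + T(\eta/2) = s_0 + \eta/2 = s$, which is exactly the intertwining $s\circ \tau_T = \tau_P\circ s$. Since $\eta/2$ is closed, the translation formula shows that $s$ remains symplectic, yielding both required properties. The main obstacle is the naturality step: verifying $\tau_P^*\omega_P = \overline{\omega_P}$ and its counterpart $(\tau_T^\sharp)^*\omega_L = \overline{\omega_L}$, together with the compatibility identity $\tau_P\circ I_{s_0} = I_{s_1}\circ \tau_T^\sharp$; once these naturality statements are in place, the algebraic averaging is routine.
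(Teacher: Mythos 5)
Your proposal is correct and follows essentially the same route as the paper: take a symplectic section $s_0$ from \cite{Ka}, \cite{AB}, form its image $s_0'=\tau_P\circ s_0\circ\tau_T$ under the induced involution, observe that the difference one--form is closed because both sections are symplectic, and translate $s_0$ by half of that difference to get an equivariant symplectic section. The only (inessential) divergence is in how you certify that $s_0'$ is again symplectic — you invoke transformation laws of $\omega_P$ and $\omega_L$ under the lifted involutions, whereas the paper argues that the image of $s_0'$ is Lagrangian because the image of $s_0$ is; both rest on the same unproved naturality of $\tau_P$ with respect to the symplectic form and the torsor structure.
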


\begin{proof}
Fix a holomorphic section $s_0\, :\, {\mathcal T}(Y)\, \longrightarrow\,
{\mathcal P}_0(Y)$ such that the map $I_{s_0}$ in Eq. \eqref{cI}
takes the Liouville symplectic form on $\Omega^1_{{\mathcal T}(Y)}$
to the natural symplectic form on ${\mathcal P}_0(Y)$; from
\cite{Ka}, \cite{AB} we know that such sections exist. Let
$$
s'_0\, :\, {\mathcal T}(Y)\, \longrightarrow\,
{\mathcal P}_0(Y)
$$
be the section of $\varphi$ defined by $z\, \longmapsto\, 
(\tau_P\circ s_0\circ \tau_T)(z)$. Since both
$\tau_P$ and $\tau_T$ are anti--holomorphic, and $s$ is
holomorphic, it follows that $s'_0$ is holomorphic. Let
$$
\omega\, :\, H^0({\mathcal T}(Y), \Omega^1_{{\mathcal T}(Y)})
$$
be the holomorphic section uniquely determined by the condition
that
$$
{\mathcal A}(s_0(z)\, ,\omega(z))\, =\, s'_0(z)
$$
for all $z\, \in\, {\mathcal T}(Y)$, where $\mathcal A$
is the map in Eq. \eqref{cA}. The involution $\tau_P$ takes
the symplectic form on ${\mathcal P}_0(Y)$ to its negative. From
this it can be deduced that $I_{s'_0}$ also
takes the Liouville symplectic form on $\Omega^1_{{\mathcal T}(Y)}$
to the symplectic form on ${\mathcal P}_0(Y)$. Indeed, it suffices
to show that the image of $s'_0$ is Lagrangian. But this is a 
consequence of the fact that the image of $s_0$ is Lagrangian.

Let
$$
s\, :\, {\mathcal T}(Y)\, \longrightarrow\,
{\mathcal P}_0(Y)
$$
be the holomorphic section of $\varphi$ defined by
$z\, \longmapsto\, {\mathcal A}(s_0(z)\, ,\omega(z)/2)$.
We will show that $s$ satisfies all the conditions in the lemma.

{}From the construction of $s$ it follows that
$s\circ \tau_T\, =\, \tau_P\circ s$.

Using $s_0$, identify ${\mathcal P}_0(Y)$ with 
$\Omega^1_{{\mathcal T}(Y)}$. Using this identification
of ${\mathcal P}_0(Y)$ with $\Omega^1_{{\mathcal T}(Y)}$, both
$s'_0$ and $s$ are holomorphic one--forms on ${\mathcal T}(Y)$.
These one--forms will be denoted by $\widehat{s}'_0$
and $\widehat{s}$ respectively. From the construction
of $s$ it follows that
\begin{equation}\label{12}
\widehat{s}\,=\, \frac{1}{2}\widehat{s}'_0\, .
\end{equation}
Since $I_{s'_0}$ takes the Liouville symplectic form on
$\Omega^1_{{\mathcal T}(Y)}$ to the symplectic form
on ${\mathcal P}_0(Y)$, we conclude that $d\widehat{s}'_0\,=\, 0$.
Hence from Eq. \eqref{12},
$$
d\widehat{s} \, =\, 0\, .
$$
This immediately implies that $I_s$ in Eq. \eqref{cI}
takes the Liouville symplectic form on $\Omega^1_{{\mathcal T}(Y)}$
to the natural symplectic form on ${\mathcal P}_0(Y)$. This
completes the proof of the lemma.
\end{proof}

Let $M$ be a manifold. The total space of the cotangent bundle
$T^*M$ is equipped with the Liouville symplectic form. Let
$S\, \subset\, M$ be a smooth submanifold. Let
$$
N^*_S\, \subset\, (T^*M)\vert_S\, \subset\, T^*M
$$
be the co--normal bundle which is given by the dual of the projection
of $(TM)\vert_S$ to the normal bundle $N_S$ to $S$. The submanifold
$N^*_S\, \subset\, T^*M$ is Lagrangian.

Take any component $\mathcal S$ of the fixed point locus of
the involution $\tau_P$ of ${\mathcal P}_0(Y)$. Fix a section
$s$ as in Lemma \ref{lem6}; identify ${\mathcal P}_0(Y)$ with
$\Omega^1_{{\mathcal T}(Y)}$ using $s$. In terms of this
identification, the submanifold $\mathcal S$ coincides with the
co--normal bundle of the submanifold $\varphi(\mathcal S)\,\subset\,
{\mathcal T}(Y)$.

\section{Symplectic structure on moduli space of 
projective structures}\label{sec-s}

Let $V$ be a complex vector space of dimension two.
For any nonnegative integer $i$, consider the homomorphism
\begin{equation}\label{sle1}
\Phi_i\, :\, H^0({\mathbb P}(V),\, T{\mathbb P}(V))\otimes_{\mathbb C}
{\mathcal O}_{{\mathbb P}(V)}\, \longrightarrow\,
J^i(T{\mathbb P}(V))
\end{equation}
that sends any pair $(x\, ,s)$, where $x\, \in\, {\mathbb P}(V)$
and $s\, \in\, H^0({\mathbb P}(V),\, T{\mathbb P}(V))$, to the
element of $J^2(T{\mathbb P}(V))_x$ obtained by restricting $s$
to the $i$--th order infinitesimal neighborhood of $x$. It is
easy to see that $\Phi_2$ is an isomorphism.
The composition
\begin{equation}\label{sle2}
\Phi_3\circ \Phi^{-1}_2\, :\, J^2(T{\mathbb P}(V))\,
\longrightarrow \,J^3(T{\mathbb P}(V))
\end{equation}
is a splitting of the jet sequence
\begin{equation}\label{sle3}
0\, \longrightarrow\,
K^{\otimes 3}_{{\mathbb P}(V)}\otimes T{\mathbb P}(V)
\,=\,K^{\otimes 2}_{{\mathbb P}(V)}
\,\stackrel{\iota_0}{\longrightarrow}\,
J^3(T{\mathbb P}(V))\,\longrightarrow\,
J^2(T{\mathbb P}(V)) \,\longrightarrow\, 0\, .
\end{equation}
Let
\begin{equation}\label{sle4}
{\mathcal D}({\mathbb P}(V))\, :\, J^3(T{\mathbb P}(V))
\,\longrightarrow\,K^{\otimes 2}_{{\mathbb P}(V)}
\end{equation}
be the unique homomorphism such that
$$
\text{kernel}({\mathcal D}({\mathbb P}(V)))
\,=\, \text{image}(\Phi_3\circ \Phi^{-1}_2)\, ,
$$
and
\begin{equation}\label{sll1}
{\mathcal D}({\mathbb P}(V))\circ\iota_0\, =\,\text{Id}_{
K^{\otimes 2}_{{\mathbb P}(V)}}\, ,
\end{equation}
where $\iota_0$ is the homomorphism
in Eq. \eqref{sle3}. Therefore, ${\mathcal D}({\mathbb P}(V))$
is a global holomorphic differential operator of third order,
more precisely,
$$
{\mathcal D}({\mathbb P}(V))\, \in\, H^0({\mathbb P}(V),\,
\text{Diff}^3(T{\mathbb P}(V), K^{\otimes 2}_{{\mathbb P}(V)}))\, .
$$
The symbol of the differential operator ${\mathcal D}({\mathbb P}(V))$
is a holomorphic section of
$$
(T{\mathbb P}(V))^{\otimes 3}
\otimes {\mathcal H}om(T{\mathbb P}(V),\,
K^{\otimes 2}_{{\mathbb P}(V)})\,=\,{\mathcal O}_X\, ,
$$
and it coincides with ${\mathcal D}({\mathbb P}(V))\circ\iota_0$,
where $\iota_0$ is the homomorphism in Eq. \eqref{sle3}. From
Eq. \eqref{sll1} it follows that the symbol of
the differential operator ${\mathcal D}({\mathbb P}(V))$ is the
constant function $1$.

Let
\begin{equation}\label{sle5}
{\mathcal S}({\mathbb P}(V))\, \longrightarrow\,
{\mathbb P}(V)
\end{equation}
be the local system defined by the sheaf of solutions
of the differential operator
${\mathcal D}({\mathbb P}(V))$; it coincides with 
the trivial vector bundle with fiber
$H^0({\mathbb P}(V),\, T{\mathbb P}(V))\, =\, \text{sl}(V)$,
where $\text{sl}(V)\, \subset\, \text{End}(V)$ is the
endomorphisms of trace zero. Hence
\begin{equation}\label{sle6}
{\mathcal S}({\mathbb P}(V))\,=\, {\mathbb P}(V)\times
\text{sl}(V)\, .
\end{equation}

The standard action of $\text{GL}(V)$ on $V$ defines
an action of $\text{GL}(V)$ on $T({\mathbb P}(V))$; hence we have
an action of $\text{GL}(V)$ on each tensor power of
$T({\mathbb P}(V))$. The differential operator ${\mathcal D}({\mathbb 
P}(V))$ is clearly fixed by the action of $\text{GL}(V)$. Also,
the identification in Eq. \eqref{sle6} is $\text{GL}(V)$--equivariant
(the action of $\text{GL}(V)$ on $\text{sl}(V)$ is the adjoint
one).

Now, let $X$ be a compact connected
Riemann surface equipped with a projective
structure $\mathcal P$. Identifying ${\mathbb C}{\mathbb P}^1$
with ${\mathbb P}(V)$, we may consider the projective coordinates
as embeddings of open subsets of $X$ into ${\mathbb P}(V)$. With
this identification, the transition functions lie in
$\text{PGL}(V)$.

Since the differential operator ${\mathcal D}({\mathbb P}(V))$
in Eq. \eqref{sle4} is $\text{GL}(V)$--invariant, and the
transition functions for the projective coordinate functions lie
in $\text{PGL}(V)$, the projective structure $\mathcal P$ produces
a differential operator
\begin{equation}\label{sle7}
{\mathcal D}_{\mathcal P}\, \in\, H^0(X,\,
\text{Diff}^3(TX, K^{\otimes 2}_X))
\end{equation}
which is constructed locally from ${\mathcal D}({\mathbb P}(V))$.
The symbol of ${\mathcal D}_{\mathcal P}$ is the constant function
$1$ because the symbol of ${\mathcal D}({\mathbb P}(V))$ is so.
Let
\begin{equation}\label{sle8}
{\mathcal S}({\mathcal P})\, \longrightarrow\, X
\end{equation}
be the local system defined by the sheaf of solutions
of ${\mathcal D}_{\mathcal P}$.

Let
$$
{\mathbb P}_{\mathcal P}\, \longrightarrow\, X
$$
be the flat $\text{PGL}(V)$--bundle given by the projective
structure $\mathcal P$ (see Lemma \ref{lem2}); we recall that
the holomorphic $\text{PGL}(V)$--bundle underlying
${\mathbb P}_{\mathcal P}$ coincides with ${\mathbb P}_L$
constructed in Eq. \eqref{e2}. Let
\begin{equation}\label{sle9}
\text{ad}({\mathbb P}_{\mathcal P}) \, \longrightarrow\, X
\end{equation}
be the corresponding flat adjoint vector bundle; recall that
$\text{ad}({\mathbb P}_{\mathcal P})$ is associated to
${\mathbb P}_{\mathcal P}$ for the adjoint action of
$\text{PGL}(V)$ on $\text{sl}(V)$. Let
\begin{equation}\label{sle10}
\underline{\text{ad}({\mathbb P}_{\mathcal P})}
\, \longrightarrow\, X
\end{equation}
be the local system defined by the sheaf of flat sections
of $\text{ad}({\mathbb P}_{\mathcal P})$. Since the identification
in Eq. \eqref{sle6} is $\text{GL}(V)$--invariant, we have
\begin{equation}\label{sle11}
\underline{\text{ad}({\mathbb P}_{\mathcal P})}\,=\,
{\mathcal S}({\mathcal P})\, ,
\end{equation}
where ${\mathcal S}({\mathcal P})$ and $\underline{\text{ad}({\mathbb 
P}_{\mathcal P})}$ are constructed in Eq. \eqref{sle8} and
Eq. \eqref{sle10} respectively.

The space of infinitesimal deformations of a representation
of $\pi_1(X)$ is given by the first cohomology of the local
system defined by the adjoint representation \cite{Go}. In particular,
the space of infinitesimal deformations of the representation
of $\pi_1(X)$
in $\text{PGL}(V)$ associated to $\mathcal P$ is given
by $H^1(X,\, \underline{\text{ad}({\mathbb P}_{\mathcal P})})$.
This implies that the space of infinitesimal deformations of
the projective structure $\mathcal P$ (in the isotopy
classes of projective structures) on the oriented
$C^\infty$ surface $X$ is given by
$H^1(X,\, \underline{\text{ad}({\mathbb P}_{\mathcal P})})$
(recall that the map $\mu$ in Eq. \eqref{mu} is an open embedding).

As in Eq. \eqref{mu}, let ${\mathcal P}_0(X)$ denote the space
of all projective structures on the oriented
$C^\infty$ surface $X$ modulo the group $\text{Diffeo}_0(X)$
of diffeomorphisms of $X$ homotopic to the identity map.
For the projective structure $\mathcal P$
on $X$, we have noted above that
\begin{equation}\label{sle12}
T_{\mathcal P}{\mathcal P}_0(X)\,=\,
H^1(X,\, \underline{\text{ad}({\mathbb P}_{\mathcal P})})\, .
\end{equation}

Let
$$
{\mathcal C}^\bullet \,~\, ~\, {~:~}\,~\, ~\, {\mathcal C}^0\, :=\,
TX \, \stackrel{{\mathcal D}_{\mathcal P}}{\longrightarrow}\,
{\mathcal C}^1\, :=\, K^{\otimes 2}_X
$$
be the complex of sheaves of $X$. From Eq. \eqref{sle11},
Eq. \eqref{sle8} and Eq. \eqref{sle12},
\begin{equation}\label{sle13}
T_{\mathcal P}{\mathcal P}_0(X)\,=\,
{\mathbb H}^1(X,\, {\mathcal C}^\bullet)\, ,
\end{equation}
where ${\mathbb H}^1$ is the hypercohomology.

Consider the tensor product of the complex of
sheaves ${\mathcal C}^\bullet$ with itself
$$
{\mathcal C}^\bullet\otimes_{\mathbb C}
{\mathcal C}^\bullet\, \,~\, ~\, {~:~}\,~\, ~\, \, {\mathcal 
C}^0\otimes_{\mathbb C}{\mathcal C}^0\,
\stackrel{\text{Id}\otimes {\mathcal D}_{\mathcal P}\oplus
{\mathcal D}_{\mathcal P}\otimes\text{Id}}{\longrightarrow}\,
(TX\otimes_{\mathbb C}K^{\otimes 2}_X)\oplus
(K^{\otimes 2}_X\otimes_{\mathbb C}TX)
$$
$$
\stackrel{{\mathcal D}_{\mathcal P}\otimes\text{Id}-
\text{Id}\otimes {\mathcal D}_{\mathcal P}}{\longrightarrow}\,
{\mathcal C}^1\otimes_{\mathbb C}{\mathcal C}^1\, .
$$
Also, consider the complex
$$
K_X[1]\, \,~\, ~\, {~:~}\,~\, ~\, \, 0\, \longrightarrow\, K_X\, .
$$
The natural contraction of $TX$ with $K_X$ defines a
homomorphism of complexes
$$
{\mathcal C}^\bullet\otimes_{\mathbb C}{\mathcal C}^\bullet\, 
\longrightarrow\, K_X[1]\, .
$$
Using it, we have homomorphisms
$$
{\mathbb H}^1(X,\, {\mathcal C}^\bullet)\otimes_{\mathbb C}
{\mathbb H}^1(X,\, {\mathcal C}^\bullet)\,\longrightarrow\,
{\mathbb H}^2(X,\, {\mathcal C}^\bullet\otimes_{\mathbb C}
{\mathcal C}^\bullet)\,\longrightarrow\,
{\mathbb H}^2(X,\, K_X[1])\, =\, H^1(X,\, K_X)\, =\,
\mathbb C\, .
$$
The resulting pairing on $T_{\mathcal P}{\mathcal P}_0(X)$
(see Eq. \eqref{sle12}) coincides with the natural symplectic
form on ${\mathcal P}_0(X)$.

\medskip
\noindent
\textbf{Acknowledgements.} We thank the referee for helpful
comments.


\end{document}